\newcommand{\bl}{\boldsymbol{\ell}}
\newcommand{\bx}{\mathbf{x}}
\newcommand{\by}{\mathbf{y}}
\newcommand{\bF}{\mathbf{F}}
\newcommand{\KK}{\mathbb{K}}
\newcommand{\PP}{\mathbb{P}}
\newcommand{\CC}{\mathbb{C}}
\newcommand{\pr}{\mathbf{pr}}
\newcommand{\NN}{\mathbb{N}}
\DeclareMathOperator{\perm}{perm}
\newtheorem{thm}{Theorem}[section]
\newtheorem{prop}[thm]{Proposition}
\newtheorem{lemma}[thm]{Lemma}
\newtheorem{conj}[thm]{Conjecture}
\newtheorem{lem}[thm]{Lemma}
\theoremstyle{definition}
\newtheorem{defn}[thm]{Definition}
\newtheorem{rem}[thm]{Remark}
\newtheorem{ex}[thm]{Example}
\title{Fano Schemes for Generic Sums of Products of Linear Forms}
\author{Nathan Ilten}
\address{Department of Mathematics, Simon Fraser University,
8888 University Drive, Burnaby BC V5A1S6, Canada}
\email{nilten@sfu.ca}
\author{Hendrik S\"u\ss{}}
\address{School of Mathematics, 
The University of Manchester
Alan Turing Building,
Oxford Road,
Manchester M13 9PL,
United Kingdom
}
\email{hendrik.suess@manchester.ac.uk}
\begin{document}

\begin{abstract}
We study the Fano scheme of $k$-planes contained in the hypersurface cut out by a generic sum of products of linear forms. In particular, we show that under certain hypotheses, linear subspaces of sufficiently high dimension must be contained in a coordinate hyperplane. We use our results on these Fano schemes to obtain a lower bound for the product rank of a linear form. This provides a new lower bound for the product ranks of the $6\times 6$ Pfaffian and $4\times 4$ permanent, as well as giving a new proof that the product and tensor ranks of the $3\times 3$ determinant equal five. Based on our results, we formulate several conjectures. 
\end{abstract}
\maketitle
\section{Introduction}
Given an embedded projective variety $X\subset \PP^n$, its Fano scheme $\bF_k(X)$ is the fine moduli space parametrizing projective $k$-planes contained in $X$. Such Fano schemes have been considered extensively for the case of sufficiently general hypersurfaces \cite{altman:77a, barth:81a,langer:97a} but less so for particular hypersurfaces \cite{harris:98a, beheshti:06a, ilten:15a}. In this article, we study the Fano schemes $\bF_k(X)$ for the special family of irreducible hypersurfaces 
\[
X=X_{r,d}=V\left(\sum_{i=1}^r\prod_{j=1}^d c_{ij}x_{ij}\right)\subset \PP^{rd-1}, \quad c_{ij} \in \KK^*
\]
for any $r>1,d>2$. Up to projective equivalence these hypersurfaces do not depend on the choice of the  $c_{ij} \in \KK^*$. Hence, in the following we may assume that $c_{ij}=1$ for all $1 \leq i \leq r$ and  $1 \leq j \leq d$. Moreover, for $d > 2$ the hypersurfaces $X_{r,d}$ are always singular along a union of coordinate hyperplanes of codimension $2r$. We exclude the case $d=2$ since this is a smooth quadric hypersurface with significantly different behaviour.

In \cite[\S 3]{ilten:16a}, Z.~Teitler and the first author considered the Fano scheme $\bF_5(X_{4,3})$. With the help of a computer-assisted calculation, they observed the curious fact that every $5$-plane $L$ of $X_{4,3}$ is either contained in a coordinate hyperplane, or there exist $1\leq a < b \leq 4$ such that $L$ is contained in $V(x_{a1}x_{a2}x_{a3}+x_{b1}x_{b2}x_{b3})$. 
This motivates the following definition:
\begin{defn}[$\lambda$-splitting]
Consider $\lambda\in\NN$.
A $k$-plane $L$ contained in $X_{r,d}$ admits a $\lambda$-\emph{splitting} if there exist $1\leq a_1 < a_2 <\ldots <a_\lambda\leq r$ such that $L$ is contained in 
\[
V\left(\sum_{i=1}^\lambda\prod_{j=1}^d x_{a_ij}\right)\subset \PP^{rd-1}.
\]
We say that $\bF_k(X_{r,d})$ is $m$-split if every $k$-plane of $X_{r,d}$ admits a  $\lambda$-splitting for some $\lambda\leq m$.
\end{defn}
\noindent The above-mentioned observation from \cite{ilten:16a} can now be rephrased as the statement that $\bF_5(X_{4,3})$ is two-split.

We make two conjectures regarding the splitting behaviour of these Fano schemes:

\begin{conj}[One-Splitting]\label{conj:one-split}
Assume $r\geq 2$ and $d\geq 3$. The Fano scheme $\bF_k(X_{r,d})$ is one-split if and only if 
\[
k\geq \begin{cases}
 \frac{r}{2}\cdot d  & r\ \textrm{even}\\
 \frac{r-1}{2}\ \cdot d + 1 & r\ \textrm{odd}\\
\end{cases}.
\]
\end{conj}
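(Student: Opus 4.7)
The plan is to establish the conjecture in its two directions separately. Since $L$ is a projective linear subspace and $V\bigl(\prod_{j} x_{aj}\bigr) = \bigcup_{j} V(x_{aj})$, a one-splitting of $L$ is equivalent to $L$ being contained in some coordinate hyperplane $V(x_{aj})$. Thus the conjecture reduces to: (a) for $k$ strictly below the stated threshold, construct a $k$-plane in $X_{r,d}$ meeting every coordinate hyperplane properly; (b) for $k$ at or above the threshold, show every $k$-plane in $X_{r,d}$ is contained in some coordinate hyperplane.

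\textbf{Construction of non-one-split planes.} For $r = 2s$ even, I would introduce parameters $y_{ij}$ for $1 \leq i \leq s$, $1 \leq j \leq d$, fix scalars $\epsilon_1, \ldots, \epsilon_d$ with $\prod_{j} \epsilon_j = -1$, and set $x_{2i-1,j} = y_{ij}$ and $x_{2i,j} = \epsilon_j y_{ij}$. Each pair contributes $(1 + \prod_{j} \epsilon_j) \prod_j y_{ij} = 0$ to the defining equation, so the resulting $(sd-1)$-plane lies in $X_{r,d}$ and no $x_{ij}$ vanishes on it. For $r = 2s+1$ odd, the construction must be modified by an additional parameter $t$ that participates simultaneously in one pair and in the last row. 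For example, when $s = 1$ and $d$ is odd, take $x_{1j} = y_j$ and $x_{2j} = -y_j$ for $j < d$, together with $x_{1d} = y_d + t$ and $x_{2d} = -y_d + t$; then the residual $\prod_j x_{1j} + \prod_j x_{2j} = 2t \prod_{j<d} y_j$ is a single product of $d$ linear forms, which is cancelled by setting $x_{rj} = y_j$ for $j < d$ and $x_{rd} = -2t$. An analogous twist handles $d$ even, and for general $s$ one applies this gadget to one distinguished pair while leaving the remaining $s-1$ pairs in the standard paired form.

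\textbf{Rigidity above the threshold.} Let $L$ be a $k$-plane in $X_{r,d}$, write $\ell_{ij} = x_{ij}|_L$ and $p_i = \prod_j \ell_{ij}$, and assume no $\ell_{ij}$ vanishes identically. Since the $\ell_{ij}$ are restrictions of a spanning set of linear coordinates on $\PP^{rd-1}$, they span $H^0(L, \CO(1))$, so the number of independent linear relations among them is exactly $rd - (k+1)$. The identity $\sum_i p_i = 0$ in $H^0(L, \CO(d))$ yields the containment
\[
\bigcap_{i \neq i_0} V(p_i) \subset V(p_{i_0}) \quad \text{for each } i_0.
\]
Each $V(p_i)$ is the union of the $d$ hyperplanes $V(\ell_{ij})$, so the left-hand side decomposes into transversal intersections $\bigcap_{i \neq i_0} V(\ell_{i,\sigma(i)})$ over maps $\sigma \colon \{1,\ldots,r\}\setminus\{i_0\} \to \{1,\ldots,d\}$. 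Each irreducible component must lie in some $V(\ell_{i_0 j_0})$, forcing a linear dependence $\ell_{i_0 j_0} \in \operatorname{span}(\ell_{i,\sigma(i)} : i \neq i_0)$. The plan is to extract enough independent relations of this form, by varying $i_0$ and $\sigma$ and pairing them up across rows, to force $rd - (k+1) \geq rd/2$ in the even case and the analogous count in the odd case, contradicting the hypothesis $k \geq$ threshold.

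\textbf{Main obstacle.} The hardest step is proving that the relations produced by different transversals are sufficiently independent to meet the counting threshold. Polynomial identities $\sum_i \prod_j \ell_{ij} = 0$ with $r \geq 3$ are flexible: the $p_i$ need not pair up and the transversal relations may collapse. I anticipate needing either a stratification of such identities (organized by how the $\{p_i\}$ break into sub-sums that vanish separately) or an induction on $r$ that performs a partial splitting to reduce to smaller $r$. A secondary obstacle is the odd-$r$ construction: while the $s=1$ gadget is clean, uniformly extending it to larger $s$ without inadvertently forcing some $\ell_{ij}$ to vanish requires careful bookkeeping.
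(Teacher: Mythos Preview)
This statement is a \emph{conjecture} in the paper, not a theorem: the authors establish the necessity direction in full (Example~\ref{ex:sharp}) but prove sufficiency only under extra hypotheses (Theorems~\ref{thm:reduction} and~\ref{thm:conj}), namely when $r\leq 6$, $d=4$, or more generally when property $C_m^d$ holds for all $m\leq (r-1)/2$. So you are attempting to prove something the paper itself leaves open.

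Your necessity argument is essentially the paper's. The paired construction for even $r$ matches Example~\ref{ex:sharp} with the scalars $\epsilon_j$ made explicit; your odd-$r$ gadget differs in detail (the paper uses $y_{(2m)1}=y_{m1}-y_{r1}$ directly rather than a $\pm t$ perturbation) but accomplishes the same thing.

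For sufficiency your route diverges from the paper's, and the obstacle you flag is genuine and unresolved. The paper does not count transversal relations; instead it orders the rows so that the new linear content $\lambda_i$ contributed by row $i$ is non-increasing, and then uses Lemma~\ref{lemma:lambdas} together with the algebraic property $C_m^d$ to force $\lambda_{r}=0,\lambda_{r-1}=0,\ldots$ successively until a contradiction. The bottleneck is exactly $C_m^d$, which the paper can verify only for $m\leq 2$ or $d\leq 4$. Your transversal-counting plan runs into the same wall in different clothing: when the products $p_i$ do not pair off but satisfy more intricate sub-identities (the very phenomenon $C_m^d$ is designed to rule out), the incidence relations you extract collapse onto one another and the independence count falls short. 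The ``stratification of such identities'' you anticipate needing is, in effect, a reformulation of proving $C_m^d$ in general. As written, your proposal does not supply the missing idea, and neither does the paper.
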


\begin{conj}[Two-Splitting]\label{conj:two-split}
Assume $r$ is even and $d\geq 3$. The Fano scheme $\bF_k(X_{r,d})$ is two-split if 
\[
k\geq \frac{r}{2}\cdot d-1.
\]
\end{conj}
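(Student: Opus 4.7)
My plan is to translate the problem into linear algebra and then proceed by induction on even $r$. Represent the $k$-plane $L$ by a $(k+1)$-dimensional subspace $\widetilde L\subset \FF^{rd}$, and set $\ell_{ij}=x_{ij}|_{\widetilde L}\in\widetilde L^*$ together with $f_i=\prod_{j=1}^d \ell_{ij}\in \operatorname{Sym}^d\widetilde L^*$. The containment $L\subset X_{r,d}$ is the polynomial identity $\sum_{i=1}^r f_i = 0$. A $1$-splitting corresponds to some $f_a\equiv 0$, and a $2$-splitting to some $f_{a_1}+f_{a_2}\equiv 0$; by unique factorization in the polynomial ring, the latter forces the multisets of lines $\{[\ell_{a_1j}]\}_j$ and $\{[\ell_{a_2j}]\}_j$ in $\PP(\widetilde L^*)$ to coincide.

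I would then proceed by induction on even $r\geq 2$. The base case $r=2$ is immediate: the relation $f_1+f_2=0$ is itself the desired splitting. For the inductive step, assume $L$ admits no $1$- or $2$-splitting and that $k\geq rd/2-1$, and seek a contradiction. Since $\widetilde L\hookrightarrow\FF^{rd}$ is injective, the $rd$ linear forms $\ell_{ij}$ span $\widetilde L^*$ with at most $rd/2$ linear dependencies among them. I would then exploit $\sum f_i=0$, together with the factored form of each summand, to force additional linear dependencies among the $\ell_{ij}$'s, contradicting this bound. A natural intermediate step is to isolate a minimal proper subset $S\subset\{1,\dots,r\}$ with $\sum_{i\in S}f_i=0$; such a subset produces a balanced subconfiguration to which the inductive hypothesis can potentially be applied after relabelling.

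The main obstacle is translating the non-linear constraint ``$\sum f_i=0$ with no two factorizations coinciding'' into the required sharp bound on $\dim\widetilde L$. The unique-factorization condition makes direct linear methods inadequate, and the fact that even the smallest nontrivial case $(r,d,k)=(4,3,5)$ from \cite{ilten:16a} was settled only by a computer-assisted calculation indicates real combinatorial subtlety. One route worth trying is to leverage Conjecture~\ref{conj:one-split} as a tool: starting from an $L$ of dimension $rd/2-1$ without a $2$-splitting, look for $(rd/2)$-dimensional families containing or degenerating to $L$ and extract a cancellation between two summands from the one-splitting of generic members. A more geometric alternative goes via the secant varieties of the Chow variety $\mathrm{Chow}_{1,d}(\widetilde L^*)\subset \PP(\operatorname{Sym}^d\widetilde L^*)$ of products of $d$ linear forms, within which $\sum f_i=0$ manifests the $f_i$'s as a balanced configuration amenable to geometric analysis.
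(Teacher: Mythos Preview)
Your proposal is not a proof; it is an outline that explicitly names its own gap (``The main obstacle is translating the non-linear constraint\ldots'') and then offers only heuristic suggestions (degenerations, secant varieties of Chow varieties) without carrying any of them out. In particular, the inductive step is empty: you posit a minimal subset $S$ with $\sum_{i\in S}f_i=0$, but such a subset need not have even cardinality, need not satisfy the dimension bound $k\geq |S|d/2-1$ after restriction, and in any case you give no mechanism to produce it or to pass from it to a $2$-splitting of the original configuration. The observation that the $\ell_{ij}$ span $\widetilde L^*$ with at most $rd/2$ linear relations is correct but by itself far too weak to force a pairwise cancellation.

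You should also be aware that the statement is a \emph{conjecture} in the paper and is not proved there in full generality. What the paper does prove is a reduction (Theorem~\ref{thm:reduction}) and several special cases (Theorem~\ref{thm:conj}). The paper's method is quite different from your sketch and is worth comparing. Rather than inducting on $r$, the paper orders the summands $\by_1,\ldots,\by_r$ greedily so that the numbers $\lambda_i$ of ``new'' linear forms contributed by $\by_i$ are nonincreasing, and then shows inductively (via Lemma~\ref{lemma:lambdas}) that $\lambda_{r}=\lambda_{r-1}=\cdots=\lambda_{r/2+1}=0$. Each step of this descent is powered not by a crude linear count but by the algebraic property $C^d_m$ governing equations of the form $\bl_1+\cdots+\bl_m=\sum f_i\bx_i$; establishing $C^d_m$ (\S\ref{sec:sums}) is where the real work lies and is exactly the ``non-linear constraint'' you were unable to exploit. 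Once the $\lambda_i$ vanish for $i>r/2$, the two-splitting conclusion is obtained from the uniqueness Lemma~\ref{lemma:uniqueness}. Thus the paper's partial results already go well beyond your outline, and a general proof along either your lines or the paper's would require a genuinely new idea (most likely a proof of $C^d_m$ for all $m$).
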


\noindent We show in Example \ref{ex:sharp} that the bound on $k$ of Conjecture \ref{conj:one-split} is indeed necessary for one-splitting. However, the sufficiency of the conditions of Conjectures \ref{conj:one-split} and \ref{conj:two-split} for one- and two-splitting is less obvious. Our belief in these conjectures is motivated by our Theorem \ref{thm:conj} below and the connection with the property $C^d_m$ described below. It would be interesting to formulate conjectures characterizing $m$-splitting of $\bF_k(X_{r,d})$ in general (including the case $r$ odd and $m=2$) but we don't know what they would be. For our application below (Theorem \ref{thm:bound}), understanding the cases $m=1$ and $m=2$ suffices.

The following example illustrates the ideas we will use to attack Conjectures \ref{conj:one-split} and \ref{conj:two-split}:
\begin{ex}[$\bF_5(X_{4,3})$ is two-split]
	We are considering the hypersurface $X_{4,3}$ in $\PP^{11}$, equipped with coordinates $x_{11},\ldots,x_{43}$. Any $5$-plane $L$  in $\PP^{11}$ can be represented as the rowspan of a full rank $6\times 12$ matrix $B=(b_{\alpha,ij})$, with rows indexed by $\alpha=0,\ldots ,5$ and column $ij$ corresponding to the homogeneous coordinates $x_{ij}$ on $\PP^{11}$.
We define linear forms $y_{ij}$ in $\KK[z_0,\ldots, z_5]$ by 
\[y_{ij}=\sum_\alpha b_{\alpha,ij}z_\alpha
\]
and note that $L\subset X_{4,3}$ if and only if the form 
\[
	h:=\sum_{i=1}^4\prod_{j=1}^3 y_{ij}
\]
is equal to zero.

Since $B$ has full rank, we can assume that there is a submatrix $B'$ of $B$ consisting of six columns which form the identity matrix. Grouping the columns of $B$ into the $4$ blocks whose indices $ij$ have the same $j$ value, we see that either:
\begin{enumerate}
	\item There are two blocks which each contain at least two columns of $B'$. This implies $h=f_1z_0z_1+f_2z_2z_3-\bl_1-\bl_2$, where $f_1,f_2$ are linear forms and $\bl_1,\bl_2$ are products of three linear forms. Or,
	\item Every block contains at least one column of $B'$, and there is a block containing three columns of $B'$. This implies $h=z_0z_1z_2+f_1z_3+f_2z_4+f_3z_5$.
\end{enumerate}

If $L\subset X_{4,3}$ and hence $h=0$, the second case cannot occur, since $z_0z_1z_2$ is not in the ideal generated by $z_3,z_4,z_5$. In the first case, the equation $h=0$ translates to
\[
\bl_1+\bl_2=z_0z_1f_1+z_2z_3f_2.
\]
We will see in  \S \ref{sec:sums} that this is only possible if either one of $f_1,f_2$ vanishes (in which case $L$ is one-split), or (after permuting indices) $\bl_1=z_0z_1f_1$ (in which case $L$ is two-split).
\end{ex}

More generally, in our study of $\bF_k(X_{r,d})$ we are led to 
consider degree $d>1$ homogeneous equations of the form 
\begin{equation}\label{eqn:sumprod}
\bl_1+\ldots+\bl_m=\sum_{i=0}^mf_i\bx_i
\end{equation}
where the $\bx_i$ are pairwise coprime squarefree monomials, the $\bl_i$ are degree $d$ products of linear forms (possibly equal to $0$, and the $f_i$ are degree $d-\deg \bx_i$ products of linear forms in some polynomial ring (also possibly equal to $0$).
The following property will be essential in our analysis:
\begin{defn}[Property $C^d_m$]
We say that  $C^d_m$ is true if, for any equation of the form \eqref{eqn:sumprod} satisfying $\deg \bx_i+\deg \bx_j\geq d+2$ for all $i\neq j$, it follows that there is some $i$ for which $f_i=0$. 
\end{defn}
\begin{ex}
Property $C_1^2$ is simply stating the obvious fact that for variables $x_1,\ldots,x_4$, the form $x_1x_2+x_3x_4$ is not a product of linear forms. Property $C_1^3$ states the less-obvious fact that for variables $x_1,\ldots,x_5$ and non-zero linear form $f$, the form $x_1x_2x_3+fx_4x_5$ is not a product of linear forms.  
\end{ex}

Our first main result relates the above definition to our two conjectures:
\begin{thm}\label{thm:reduction}
Fix $r\geq 2$, $d\geq 3$ such that either 
\begin{enumerate}
\item $d$ is even,
\item $r$ is even and $d\geq r$, or
\item $r$ is odd and $r\leq 5$.
\end{enumerate}
Suppose that $C^d_m$ is true for all \[m\leq \frac{r-1}{2}.\] Then Conjectures \ref{conj:one-split} and \ref{conj:two-split} hold for this choice of $(r,d)$.
\end{thm}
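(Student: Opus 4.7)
The plan is to translate the condition $L\subset X_{r,d}$ into a polynomial identity on $L\cong\PP^k$ of the form~\eqref{eqn:sumprod}, then apply property $C^d_m$ to extract the desired splitting. For each $i\in\{1,\ldots,r\}$, set $\bl_i^L=\prod_{j=1}^d x_{ij}|_L$, which is either identically zero (when $L\subset V(x_{ij})$ for some $j$) or a genuine product of $d$ nonzero linear forms on $L$. Restricting the defining equation of $X_{r,d}$ gives the identity $\sum_{i=1}^r \bl_i^L=0$. If some $\bl_i^L$ vanishes identically, then $L$ is already one-split, so we may assume all $\bl_i^L$ are nonzero products of $d$ linear forms on $L$.

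The next step is to convert this identity into the shape required by~\eqref{eqn:sumprod}. The key input is the dimension hypothesis: under the conjectural bound on $k$, the $rd$ linear forms $\{x_{ij}|_L\}$ lie in the $(k+1)$-dimensional space of linear functionals on $L$, with $k+1$ only slightly more than $rd/2$, so many linear dependencies among them must hold. I would use such dependencies to pull out shared linear factors across different $\bl_i^L$, so that certain terms can be regrouped and rewritten as $f_i\bx_i$, where each $\bx_i$ is a pairwise coprime squarefree monomial (in a suitable basis of linear forms on $L$) capturing the shared factors. A careful count should then show that the remaining ``free'' terms $\bl_1,\ldots,\bl_m$ number at most $m=\lfloor(r-1)/2\rfloor$, that the $\bx_i$ satisfy $\deg\bx_i+\deg\bx_j\ge d+2$, and that the result is an equation of the form~\eqref{eqn:sumprod}.

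Applying $C^d_m$ then forces some $f_i=0$, which translates geometrically into $L\subset V(\bl_a^L)$ (a one-splitting) or $L\subset V(\bl_a^L+\bl_b^L)$ (a two-splitting), depending on the details of the grouping; for Conjecture~\ref{conj:one-split} the stricter inequality on $k$ should rule out the second alternative, while Conjecture~\ref{conj:two-split} accepts both. The main obstacle will be the regrouping step: one must simultaneously ensure pairwise coprimality of the $\bx_i$, the degree inequality $\deg\bx_i+\deg\bx_j\ge d+2$, and the bound $m\le\lfloor(r-1)/2\rfloor$, and it is here that the parity hypotheses and the $r\le 5$ restriction in the odd-$r$ case likely intervene---they appear to be precisely the regimes in which the available dimension count is slack enough for this combinatorial bookkeeping to succeed. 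Induction on $r$, reducing by two at each step by peeling off a one- or two-split summand once its existence is established, provides a natural organizing framework, with small base cases handled directly.
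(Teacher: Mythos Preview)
Your overall strategy—restrict to $L$ to obtain $\sum_i \by_i = 0$ with $\by_i = \prod_j x_{ij}|_L$, then feed this into property $C^d_m$—is the paper's. But the steps you flag as ``the main obstacle'' are where the argument's content actually lives, and your guesses at them are off in ways that matter.

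The $\bx_i$ are not shared factors pulled across several $\by_i$. Instead one reorders the $\by_i$ greedily so that each $\by_i$ contributes $\lambda_i$ new linearly independent factors $z_{ij}$ to a growing basis of linear forms on $L$; then $\bx_i = \prod_j z_{ij}$ has degree $\lambda_i$, the $\bx_i$ are pairwise coprime by construction, and $\by_i = f_i\bx_i$ with $f_i$ itself a product of linear forms. The degree condition $\lambda_i + \lambda_j \ge d+2$ is not a bare dimension count: it requires a separate combinatorial lemma (Lemma~\ref{lemma:lambdas}) about the decreasing sequence $(\lambda_i)$, and \emph{that} lemma is precisely where the parity and size hypotheses on $(r,d)$ enter. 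The induction is not on $r$ by peeling off split summands; it is an internal induction showing $\lambda_r = \lambda_{r-1} = \cdots = 0$ one index at a time, each step applying $C^d_{s+1}$ (after setting certain $z_{i1}$ to zero) to derive a contradiction unless the next $\lambda$ vanishes. Only once $\lambda_i = 0$ for all $i > r/2$ does one conclude: for the one-splitting bound on $k$ this already forces $k+1 = \sum \lambda_i$ to be too small, while for two-splitting in the even case one still needs a separate uniqueness result (Lemma~\ref{lemma:uniqueness}, itself a consequence of $C^d_{r/2-1}$) to match the $\by_i$ in pairs. Your ``reduce $r$ by two'' framework supplies none of these ingredients, and in particular the conclusion $f_i=0$ from $C^d_m$ yields a contradiction in this setup, not a splitting you can peel off.
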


Secondly, we use this to prove our conjectures in some special cases.
\begin{thm}\label{thm:conj}
Property $C^d_m$ is true if $m\leq 2$ or if $d\leq 4$. Furthermore, Conjectures \ref{conj:one-split} and \ref{conj:two-split} hold if $r\leq 6$  or if $d= 4$.
\end{thm}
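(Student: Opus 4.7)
My plan is to first establish Property $C^d_m$ in the stated cases and then feed this into Theorem \ref{thm:reduction} to deduce the conjectures, handling by hand the remaining pairs $(r,d)$ with $r\leq 6$ that are not covered by that theorem.

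For $C^d_m$ with $m \leq 2$, I would exploit the pairwise coprimality of the $\bx_i$ by specialization. Setting all variables of $\bx_j$ (for $j \neq i$) to zero collapses the right-hand side of \eqref{eqn:sumprod} to a single term $(f_i|_{*})\bx_i$, while each $\bl_k$ restricts to a product of $d$ linear forms in the remaining variables. The constraint $\deg \bx_i + \deg \bx_j \geq d+2$ forces $\deg \bx_i \geq 2$ whenever a second nontrivial term is present, so $\bx_i$ must absorb many linear factors of each restricted $\bl_k$. Counting how many linear factors of each $\bl_k$ can be supported purely on the variables of a fixed $\bx_i$ should give a contradiction unless some $f_i = 0$. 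The case $m=0$ is immediate, $m=1$ is a direct one-specialization argument, and $m=2$ follows the same scheme with more bookkeeping.

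For $C^d_m$ with $d \leq 4$ and $m$ arbitrary, I would induct on $m$ with the cases $m \leq 2$ as the base. The degree profile $(\deg \bx_0, \ldots, \deg \bx_m)$ is severely restricted: when $d=3$, all but at most one $\deg \bx_i$ equals $3$; when $d=4$, each $\deg \bx_i \in \{2,3,4\}$ with at most one equal to $2$. In either regime some index $i$ satisfies $\deg \bx_i = d$, so $f_i$ is a constant, and comparing monomial supports of the $\bl_k$ modulo the ideal generated by the remaining $\bx_j$ should let me eliminate one term and reduce $m$, falling back on the $m \leq 2$ base case.

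Granting $C^d_m$ in these ranges, Conjectures \ref{conj:one-split} and \ref{conj:two-split} follow from Theorem \ref{thm:reduction} whenever its hypotheses apply: $d = 4$ is even and therefore covered for any $r \geq 2$, while the values $r \in \{2,3,5\}$ are covered via the $d \geq r$ clause for $r=2$ and the odd-$r$ clause for $r \in \{3,5\}$. The uncovered pairs with $r \leq 6$ are $(4,3)$, $(6,3)$, and $(6,5)$; these would be handled by direct analysis, using the explicit $(4,3)$ computation of \cite{ilten:16a} as a template. The main obstacle I anticipate is not the final deduction of splittings but the induction in $C^4_m$ for unrestricted $m$: tracking the interplay of the coprime squarefree monomials $\bx_i$ with the factorizations of the $\bl_k$ is combinatorially delicate, and choosing the right invariant on which to induct, most likely the sorted tuple $(\deg \bx_0, \ldots, \deg \bx_m)$, will require some care.
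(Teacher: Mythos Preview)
Your outline has a concrete error in the $d\leq 4$ induction and a structural gap in the $m\leq 2$ cases.

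\textbf{The $d\leq 4$ step.} You assert that ``some index $i$ satisfies $\deg\bx_i=d$, so $f_i$ is a constant.'' This is false for $d=4$: taking every $\deg\bx_i=3$ satisfies $\deg\bx_i+\deg\bx_j=6\geq d+2$, yet no $f_i$ is constant. So your proposed reduction (peel off a term with constant $f_i$ and drop $m$ by one) does not get started. The paper's argument is quite different: it introduces a refined property $C^d_{k,m,n}$ allowing extra ``low-degree'' terms, and runs a double induction on $(m,k)$. The key quantitative fact for $d\leq 4$ is that $\sum_{i>k}\deg f_i\leq m+1$, so the linear factors of \emph{all} the $\bl_j$ must be drawn from at most $m+1$ forms (Lemma~\ref{lemma:help}); a covering/partition count then forces all but one $\bl_j$ to be a $d$-th power of a linear form, which collapses the left-hand side. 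None of this appears in your sketch.

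\textbf{The $m\leq 2$ cases.} Your specialization idea (kill all variables of $\bx_j$ for $j\neq i$) shows at best that $f_i$ lies in the ideal generated by those variables, not that some $f_i$ vanishes identically; the counting you allude to is not spelled out and I do not see how it closes. The paper's route is more delicate: for $m=1$ it uses a pigeonhole lemma (Lemma~\ref{lemma:divide}) to find a variable dividing both $\bl$ and some $\bx_i$, then cancels and inducts on $d$; for $m=2$ (Proposition~\ref{prop:rktwobasis}) it argues that if no $f_i$ vanishes and $\bl_1,\bl_2$ share no factor, then the total number of distinct linear factors of $\bl_1,\bl_2$ is too small, forcing a common factor after all. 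This requires the intermediate Proposition~\ref{prop:rkone} (a strengthening to $r\geq 3$ summands with weaker degree hypotheses), which your plan does not mention.

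\textbf{The residual cases $(4,3),(6,3),(6,5)$.} Pointing to the computer verification in \cite{ilten:16a} handles only $(4,3)$ and contradicts the paper's stated aim of removing that computation. The paper instead observes that in these cases Lemma~\ref{lemma:lambdas} may fail to give $\lambda_1+\lambda_2\geq d+2$, but Proposition~\ref{prop:rkone} (which needs only $\lambda_1+\lambda_{r-1}\geq d+1$) still applies and forces the required vanishing of $\lambda_i$'s.
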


Our analysis of Equation \eqref{eqn:sumprod} makes use of relatively elementary methods. However, a more sophisticated approach should also be possible. Equation \eqref{eqn:sumprod} posits that $\sum_{i=0}^mf_i\bx_i$ is a point in the $(m-1)$th secant variety of a Chow variety parametrizing degree $d$ products of linear forms. Equations for the Chow variety are classical, going back to Brill and Gordan \cite{GKZ}. More recently, Y.~Guan has provided some equations for secant varieties of Chow varieties \cite{guan2,guan1} . It would be interesting to see if these equations shed light on the vanishing of the $f_i$ from Equation \eqref{eqn:sumprod}.

Our motivation for studying $\bF_k(X_{r,d})$ is twofold. Firstly, we wish to add to the body of examples of varieties $X$ for which one understands the geometry of $\bF_k(X)$. 
If the Fano scheme $\bF_k(X_{r,d})$ is $m$-split for some $m<r$, then $k$-dimensional linear subspaces of $X_{r,d}$ can be understood in terms of linear subspaces of $X_{r',d}$ for certain $r'<d$.
We illustrate this by describing the irreducible components of $\bF_k(X_{r,d})$ for $k\geq(r-2)(d-1)+1$ whenever  $r\leq d+1$ or $d=4$, see Examples \ref{ex:one} and \ref{ex:three}.
We also characterize when $\bF_k(X_{r,d})$ is connected, see Theorem \ref{thm:connected}.

Secondly, we may use our results to obtain lower bounds on the \emph{product rank} of certain linear forms. Recall that the \emph{product rank} (also known as Chow rank) of a degree $d$ form $f$ is the smallest number $r$ such that we can write 
\begin{equation*}
f=\bl_1+\ldots+\bl_r
\end{equation*}
where the $\bl_i$ are products of $d$ linear forms. We denote the product rank of $f$ by $\pr(f)$. Note that product rank may be used to give a lower bound on \emph{tensor rank}, see \cite[\S1.3]{ilten:16a} for details.

A form $f$ in $n+1$ variables is \emph{concise} if it cannot be written as a form in fewer variables after a linear change of coordinates.
The hypersurface $V(f)$ of a concise degree $d$ form of product rank at most $r$ is isomorphic to the intersection of $X_{r,d}\subset \PP^{rd-1}$ with an $n$-dimensional linear subspace. This allows us to relate properties concerning linear subspaces contained in $V(f)$ to product rank. 
Generalizing \cite[Theorem 3.1]{ilten:16a}, we prove the following:
\begin{thm}\label{thm:bound}
Let $f$ be a concise irreducible degree $d>1$ form in $n+1$ variables such that $V(f)\subset \PP^{n}$ is covered by $k$-planes, and let $r\in \NN$.
\begin{enumerate}
\item If  $\bF_k(X_{r,d})$ is one-split, then $\pr(f)\neq r$.
\item If $r$ is even, $k>n-r$, and \[\bF_k(X_{r,d}),\ \bF_{k-d}(X_{r-2,d}),\ldots,\ \bF_{k-\frac{d(r-4)}{2}}(X_{4,d})\] are two-split, then $\pr(f)\neq r$.   
\end{enumerate}
\end{thm}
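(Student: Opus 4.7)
The plan is to argue by contradiction: suppose $\pr(f) = r$ and fix a minimal expression $f = \sum_{i=1}^r \bl_i$, where each $\bl_i = \prod_{j=1}^d \ell_{ij}$ is a product of linear forms on $\PP^n$. The central device is the linear map
\[
\varphi \colon \PP^n \to \PP^{rd-1}, \qquad [y] \mapsto [\ell_{ij}(y)]_{i,j}
\]
(defined away from the common zero locus of the $\ell_{ij}$), which by construction pulls the defining equation of $X_{r,d}$ back to $f$ and hence sends $V(f)$ into $X_{r,d}$.

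Consider first the case where the $\ell_{ij}$ span the full $(n+1)$-dimensional space of linear forms, a possibility compatible with $rd \geq n+1$. Then $\varphi$ is a linear embedding realizing $\PP^n$ as an $n$-plane $L \subset \PP^{rd-1}$ and identifying $V(f)$ with $L \cap X_{r,d}$, so it transports the $k$-planes covering $V(f)$ to $k$-planes of $X_{r,d}$ contained in $L$. For part~(1), one-splitness places each such $k$-plane in some $V(\prod_j x_{aj})$ and hence, by irreducibility of the $k$-plane, in some $V(x_{aj})$; pulling back and using irreducibility of $V(f)$ forces $V(f) \subset V(\ell_{aj})$ for a single $(a,j)$ with $\ell_{aj} \neq 0$ (the zero case being excluded by minimality of the decomposition), contradicting $\deg f > 1$. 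For part~(2), two-splitness together with the same irreducibility reduction yields $V(f) \subset V(g_S)$ for some $S \subset \{1,\ldots,r\}$ with $|S| \leq 2$, where $g_S = \sum_{i \in S} \bl_i$. The subcase $|S| \leq 1$ is handled as in part~(1). In the subcase $|S|=2$, say $S=\{a,b\}$: if $\bl_a + \bl_b = 0$, the original decomposition shortens to length $r-2$ against minimality; otherwise $\deg(\bl_a+\bl_b)=d$ combined with $f \mid \bl_a+\bl_b$ gives $\bl_a+\bl_b = cf$ for a nonzero constant $c$, so $\pr(f)\leq 2$, contradicting $r \geq 4$.

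The non-spanning case reduces to the spanning one: if the $\ell_{ij}$ span only a codimension-$s$ subspace of the linear forms, then $V(f)$ is the cone over an irreducible hypersurface $V(f') \subset \PP^{n-s}$ of degree $d$ with $\pr(f') \leq r$, and one reruns the argument with $f'$. The condition $k > n - r$ in part~(2) should guarantee that the projected covering of $V(f')$ retains enough dimension for the relevant Fano-scheme hypothesis to apply, while $rd \geq (n-s)+1$ persists.

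The main obstacle, and the reason the full chain of Fano-scheme hypotheses is presumably needed in part~(2), is that the direct argument above already yields a contradiction from two-splitness of $\bF_k(X_{r,d})$ alone. I expect the chain powers an induction on $r$ in which the $|S|=2$ reduction must itself be iterated --- for instance when the non-spanning reduction interacts nontrivially with the two-splitting and leaves a residual $(r-2)$-term decomposition --- with each inductive step dropping $r$ by $2$ and $k$ by $d$, matching the indexing in the stated chain. Verifying that all hypotheses transfer cleanly under this reduction is where the main technical work lies.
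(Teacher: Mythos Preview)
Your argument for part~(1) is correct and matches the paper's: both embed $V(f)$ into $X_{r,d}$ via the $\ell_{ij}$, use one-splitting to put each $k$-plane in a coordinate hyperplane, and conclude by irreducibility (the paper phrases this via a spanning family $S$ rather than directly, but the content is the same).

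For part~(2), your direct argument is \emph{correct and genuinely simpler than the paper's}, and your suspicion that the chain of Fano schemes must be powering some hidden induction is unfounded. Your argument uses only two-splitness of $\bF_k(X_{r,d})$ and never invokes $k > n - r$: once every $k$-plane of $V(f)$ lies in some $V(\bl_a + \bl_b)$, irreducibility of $V(f)$ forces $V(f)\subset V(\bl_a+\bl_b)$ for a \emph{single} pair, whence $f \mid (\bl_a + \bl_b)$ and so $\pr(f)\le\max(2,r-2)<r$. The paper instead iterates two-splitting down the chain $\bF_k(X_{r,d}),\bF_{k-d}(X_{r-2,d}),\ldots$ to obtain a \emph{complete pairing} of $\{1,\ldots,r\}$ such that every $L$ in the covering family lies in $\bigcap_l V(\bx_{2l-1}+\bx_{2l})$; it then studies how the proportionality ratios $y_{ij}/y_{(i-1)j}$ vary over the family, concluding either $\pr(f)\le r-2$ (some ratio constant) or that two general $k$-planes span a space of dimension $\ge k+r>n$, a contradiction. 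Your route is shorter and yields a stronger statement; the paper's machinery is not needed for the bare conclusion $\pr(f)\neq r$, though it does feed into the refinement in Remark~4.1.

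One shared loose end: both your argument and the paper's assume the $\ell_{ij}$ span the linear forms on $\PP^n$, so that $\varphi$ is an embedding and each $k$-plane of $V(f)$ maps to a genuine $k$-plane of $X_{r,d}$. This holds automatically when $V(f)$ is not a cone (since $f$ is a polynomial in the $\ell_{ij}$), which covers all the applications. Your attempted reduction in the cone case is incomplete: passing to $f'$ in $\PP^{n-s}$, the covering is only by $(k-s)$-planes, and two-splitness of $\bF_k(X_{r,d})$ does not descend to $\bF_{k-s}(X_{r,d})$. The paper glosses over the same point.
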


Applying this to the $3\times 3$ determinant of a generic matrix, we recover that its product and tensor ranks are five \cite{ilten:16a}. Note  that we have replaced the computer-aided computation of $\bF_{5}(X_{3,4})$ with a conceptual proof. 
We may also apply Theorem \ref{thm:bound} to the $4\times 4$ determinant $\det_4$ of a generic matrix to obtain $\pr(\det_4)\geq 7$; this is equal to the lower bound one obtains from Derksen and Teitler's lower bound on the Waring rank \cite{derksen:15a}. In Example \ref{ex:pfaffian} we apply the theorem to the Pfaffian $f$ of a generic $6 \times 6$ skew-symmetric matrix to obtain $\pr(f)\geq 7$, beating the previous lower bound of $6$.
Finally, in Example \ref{ex:perm} we use a slightly different argument to obtain that the product rank (and tensor rank) of the $4\times 4$ permanent is at least $6$, beating the previous lower bound of $5$.

The rest of the paper is organized as follows. In \S \ref{sec:sums}, we study equations of the form \eqref{eqn:sumprod}. We use this in \S \ref{sec:fano} to show our splitting results for the Fano schemes $\bF_{k}(X_{r,d})$, as well as studying several cases in more detail. Finally, we prove Theorem \ref{thm:bound} in \S \ref{sec:pr}  and apply our results to a number of examples including the $6\times 6$ Pfaffian and $4\times 4$ permanent.

For simplicity, we will be working over an arbitrary algebraically closed field $\KK$. Note however that all our main results clearly hold for arbitrary fields simply by restricting from $\KK$ to any subfield.  

\section{Special Sums of Products of Linear Forms}\label{sec:sums}
\subsection{Preliminaries}
In this section we will prove that property $C^d_m$ holds for $m \leq 2$ or $d \leq 4$ . We will obtain this result by using induction arguments. These arguments involve a refined version of the $C^d_m$ property. Consider an equation of the form
  \begin{equation}
    \bl_1+\ldots+\bl_m=\sum_{i=1}^{k+n} f_i\bx_i \label{eq:2},
  \end{equation}
  with $n > m$. As before, the $\bx_i$ are pairwise coprime squarefree monomials and  the $\bl_i$ are degree $d$ products of linear forms, possibly equal to zero.
  In fact, whenever we say that some polynomial $g$ is homogeneous of degree $d$, we include the possibility that $g=0$.

  We now assume simply that $f_i$ are degree $(d-\deg \bx_i)$ forms (or zero), no longer requiring that they be products of linear forms. It will be convenient to order the summands on the right hand side so that \[\deg \bx_1 \leq \deg \bx_2 \leq \ldots \leq \deg \bx_{k+n}.\]
We will maintain this ordering convention throughout all of \S\ref{sec:sums}.
Similar to the property $C^d_m$ we make the following definition.

  \begin{defn}[Property $C^d_{k,m,n}$]
    We say that $C^d_{k,m,n}$ is true, if for any equation of the form \eqref{eq:2}
    satisfying
    \begin{align}
      \deg \bx_i+\deg \bx_j\geq d+1 && \text{ for } i > k\label{eq:3}\\
      \deg \bx_i+\deg \bx_j\geq d+2 && \text{ for } i,j > k.\label{eq:4}
    \end{align}
    it follows that there are $i_1, \ldots, i_{n-m} > k$ for which $f_{i_j}=0$. 
  \end{defn}
\noindent  Note that by definition $C^d_{0,m,m+1}$ implies $C_m^d$.

  \begin{lem}
    \label{lem:multiple-vanishing}
    Fix $d$ and $m$ and assume that $C^d_{k,m,m+1}$ holds for every $k \geq 0$. Then
    $C^d_{k,m,n}$ holds for every $n > m$.
  \end{lem}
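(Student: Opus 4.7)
My plan is to induct on $n$, with the base case $n = m+1$ given directly by the hypothesis that $C^d_{k,m,m+1}$ holds for every $k \geq 0$.

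For the inductive step, fix $k \geq 0$ and a representation as in \eqref{eq:2} with $k+n$ terms satisfying \eqref{eq:3} and \eqref{eq:4}. The key idea is to \emph{re-split} the equation. Set $k' = k + (n-m-1)$, so that the right-hand side has $k' + (m+1)$ terms. Because the $\bx_i$ are ordered by nondecreasing degree, the $m+1$ largest monomials now play the role of the new special block. Since $k' \geq k$, any index $i > k'$ automatically satisfies $i > k$, so the original inequalities \eqref{eq:3} and \eqref{eq:4} immediately imply the corresponding inequalities for the $C^d_{k',m,m+1}$ setup. Applying $C^d_{k',m,m+1}$ produces some $i^* > k' \geq k$ with $f_{i^*} = 0$.

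Now remove the summand $f_{i^*}\bx_{i^*}$ from \eqref{eq:2} and relabel the remaining $k + n - 1$ monomials to restore the ordering convention. The original ordinary block of $k$ terms is untouched, so the new ordinary/special division coincides with the old one with $\bx_{i^*}$ deleted, and in particular the new special block has size $n - 1$. Conditions \eqref{eq:3} and \eqref{eq:4} persist, since dropping a term only removes constraints. The induction hypothesis $C^d_{k,m,n-1}$ then supplies $n - m - 1$ further indices $i > k$ with $f_i = 0$, and together with $i^*$ we obtain the required $n - m$ vanishing coefficients.

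I do not anticipate a real obstacle; the whole proof rests on the re-splitting trick. The only point that needs a moment of attention is checking that, after the re-split, the hypotheses of $C^d_{k',m,m+1}$ really hold, and then that the degree inequalities are inherited after deleting the monomial $\bx_{i^*}$ and reordering. Both checks are transparent once one uses $k' \geq k$ and the fact that the $\bx_i$ are listed by nondecreasing degree.
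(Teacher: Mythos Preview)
Your argument is correct and follows essentially the same approach as the paper's proof: both argue by induction on $n$ and rely on the same re-splitting trick of shifting terms between the ordinary and special blocks. The only cosmetic difference is the order of the two steps: the paper first passes from the $C^d_{k,m,n}$ setup to $C^d_{k+1,m,n-1}$, applies the induction hypothesis to kill $n-m-1$ coefficients, and then finishes with $C^d_{k,m,m+1}$; you instead invoke $C^d_{k',m,m+1}$ first to kill one coefficient and then apply the induction hypothesis $C^d_{k,m,n-1}$ to kill the remaining $n-m-1$. Neither ordering offers any real advantage over the other.
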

  \begin{proof}   
We may argue by induction on $n$. Obviously, the hypotheses for $C^d_{k,m,n}$ imply  those for $C^d_{k+1,m,n-1}$. Hence, by the induction hypothesis we have the vanishing of $(n-m-1)$ of the $f_i$, with $i> k+1$. Using this we end up with the hypotheses for $C^d_{k,m,n-((n-m)-1)}=C^d_{k,m,m+1}$ being fulfilled, and we see that another of the $f_i$ with $i > k$ has to vanish. 
\end{proof}

\subsection{Property $C_1^d$}
We will first analyze the case $m=1$. Therefore, we consider an equation of the form
\begin{equation}\label{eqn:form}
\bl=\sum_{i=1}^r f_i\bx_i
\end{equation}
with $f_i$ forms of degree $(d-\deg \bx_i)$ (possibly equal to zero), and $\bl$ a product of linear forms, also possibly equal to zero. As before, we order indices such that $\deg \bx_1\leq \deg \bx_2\leq \ldots\leq \deg \bx_r$.

\begin{rem}[Cancellation]\label{rem:cancellation}
Assume we are given a variable $x$ which divides $\bl$, one monomial $\bx_i$, and all $f_j$ for $j\neq i$. Setting
\begin{align*}
\bl'={\bl}/{x}\qquad
\bx_j'=\begin{cases}
{\bx_j}/{x}&i=j\\
\bx_j &i\neq j
\end{cases}
\qquad f_j'=\begin{cases}
{f_j}/{x}&i\neq j\\
f_j &i= j
\end{cases}
\end{align*}
leads to 
\begin{equation*}
\bl'=\sum_{i=1}^r f_i'\bx_i'
\end{equation*}
where we have reduced from forms of degree $d$ to degree $d-1$.
We call this the \emph{cancellation} of \eqref{eqn:form} by $x$.
\end{rem}

\begin{lemma}\label{lemma:dcoeff}
Let $l$ be a linear form dividing $\sum f_i\bx_i$, where the $f_i$ are forms of degree $(d-\deg \bx_i)$. 
\begin{enumerate}
\item If $\deg \bx_1+ \deg \bx_2\geq d+2$, then for all $i$, $l$ divides $\bx_i$ or $f_i$.
\item If $\deg \bx_1 +\deg \bx_2 \geq d+1$ and $l$ is a monomial, then for all $i$, $l$ divides $\bx_i$ or $f_i$.
\end{enumerate}
\end{lemma}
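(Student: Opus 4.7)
The plan is to reduce the divisibility $l \mid \sum_i f_i \bx_i$ to a polynomial identity in one fewer variable and then apply a monomial-cancellation argument, arranging so that the pairwise coprimality of the $\bx_i$ exactly compensates for the degree of any $\bx_i$ lost under elimination of a variable occurring in $l$.

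Write $l = a_1 x_1 + \ldots + a_n x_n$, fix a variable $x_1$ with $a_1 \neq 0$, and set $h := -(a_2 x_2 + \ldots + a_n x_n)/a_1 \in \KK[x_2,\ldots,x_n]$ so that $x_1 \equiv h \pmod{l}$. Partition the indices as $A \sqcup B$ with $B = \{i : x_1 \mid \bx_i\}$; pairwise coprimality of the $\bx_i$ forces $|B| \leq 1$. Define $\bx'_i := \bx_i$ and $g_i := \bar f_i$ for $i \in A$, and $\bx'_i := \bx_i/x_1$, $g_i := h\bar f_i$ for $i \in B$, where $\bar f_i$ denotes the image of $f_i$ in $\KK[x_2,\ldots,x_n]$ under the substitution $x_1 \mapsto h$. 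The original divisibility becomes the polynomial identity $\sum_i g_i \bx'_i = 0$ in $\KK[x_2,\ldots,x_n]$, where the $\bx'_i$ remain pairwise coprime squarefree monomials and each nonzero $g_i$ is homogeneous of degree $d - \deg \bx'_i$.

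The heart of the proof is showing that every $g_i$ vanishes. If a monomial $m$ occurs in $g_{i_0}$, then $m\bx'_{i_0}$, a monomial of degree $d$, must be cancelled in $\sum_j g_j \bx'_j$, so some $j \neq i_0$ satisfies $\bx'_j \mid m\bx'_{i_0}$; coprimality of $\bx'_j$ and $\bx'_{i_0}$ then forces $\bx'_j \mid m$ and hence $\deg \bx'_{i_0} + \deg \bx'_j \leq d$. Because $\bx_{i_0}$ and $\bx_j$ are coprime, $x_1$ divides at most one of them, giving $\deg \bx'_i + \deg \bx'_j \geq \deg \bx_i + \deg \bx_j - 1$. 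Under hypothesis (1) this produces the contradictory lower bound $d+1$. Under hypothesis (2), $l$ is a scalar multiple of $x_1$, so $h = 0$ and $g_i = 0$ automatically for $i \in B$; only pairs $i, j \in A$ contribute to the cancellation, no degree is lost, and the weaker bound $\deg \bx_1 + \deg \bx_2 \geq d+1$ already suffices.

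It remains to read off the conclusion. For $i \in A$, $g_i = \bar f_i = 0$ gives $l \mid f_i$. For $i \in B$, $h\bar f_i = 0$ in the integral domain $\KK[x_2,\ldots,x_n]$ yields either $\bar f_i = 0$ (when $h \neq 0$, giving $l \mid f_i$) or $h = 0$, in which case $l = a_1 x_1$ and $l \mid \bx_i$ holds since $i \in B$. I expect the main obstacle to be case (1) when $l$ is not a monomial: the substitution $x_1 \mapsto h$ destroys the monomial form of any $\bx_i$ containing $x_1$, and the cancellation argument survives only through the observation that coprimality of the $\bx_i$ allows at most one unit of degree to be lost, slack that the strengthened hypothesis $\deg \bx_1 + \deg \bx_2 \geq d+2$ provides exactly.
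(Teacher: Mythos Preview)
Your proof is correct and follows essentially the same approach as the paper: both arguments reduce modulo $l$ and observe that at most one $\bx_i$ loses a single monomial factor in the process, so the degree hypothesis (strengthened by one in case (1)) still prevents any cross-cancellation between distinct $f_i\bx_i$. The paper phrases the reduction as a linear change of coordinates sending $l$ to a variable while fixing all but one of the variables appearing in the $\bx_i$, then invokes part (2); you phrase it as the substitution $x_1\mapsto h$ and run the monomial-cancellation argument directly in $\KK[x_2,\dots,x_n]$, but the content is identical.
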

\begin{proof}
We first prove the second statement. We have
\[\sum f_i\bx_i=xg\]
for some variable $x$ and form $g$. Expanding the left hand side as a sum of monomials, we see that the degree condition ensures that no terms from $f_i\bx_i$ cancel with $f_j\bx_j$ for $i\neq j$. But every monomial on the right hand side is divisible by $x$, hence also on the left hand side. The claim follows.

For the first statement, we reduce to the second by performing a change of coordinates taking $l$ to a monomial. This can be achieved while preserving all variables in the $\bx_1,\ldots,\bx_r$ with at most one exception, say in $\bx_i$. After factoring out this one linear form from $\bx_i$, the pairwise of sum degrees is still at least $d+1$ and we may apply the second claim.
 \end{proof}

\begin{lemma}\label{lemma:divide}
Suppose $\bl=\sum_{i=1}^r f_i\bx_i$ for $r\geq 2$, $f_i$ forms of degree $(d- \deg \bx_i)$. Assume $\bl\neq 0$, and let $\lambda$ be the number of distinct factors of $\bl$. If
\[
\left\lceil \frac{\prod \deg \bx_i}{\lambda}\right\rceil > \frac{\prod \deg \bx_i}{\deg \bx_1 \cdot \deg \bx_2}
\]
then there is a variable $x$ dividing both $\bl$ and one of the $\bx_i$. This is true if $\deg \bx_1 \cdot \deg \bx_2 >\lambda$, in particular if $\deg \bx_1 \cdot \deg \bx_2>d$.
\end{lemma}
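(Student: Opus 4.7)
My plan is to proceed by contrapositive: assume that no variable of any $\bx_i$ divides $\bl$ and derive the bound $\lceil \prod \deg \bx_i / \lambda \rceil \leq \prod \deg \bx_i / (\deg \bx_1 \cdot \deg \bx_2)$. The strategy is a substitution-and-pigeonhole argument.

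First I would note that each $\bx_i$ is a product of $\deg \bx_i$ distinct variables (call this set $V_i$), and the $V_i$ are pairwise disjoint because the $\bx_i$ are pairwise coprime. For each of the $\prod_i \deg \bx_i$ tuples $(y_1, \ldots, y_r) \in V_1 \times \cdots \times V_r$, substituting $y_i = 0$ for every $i$ sends each $\bx_i$ to $0$ and therefore annihilates the right hand side of the defining identity; hence $\bl$ lies in the ideal $(y_1, \ldots, y_r)$. Since that ideal is prime and $\bl$ factors into linear forms, at least one linear factor of $\bl$ must lie in $(y_1, \ldots, y_r)$. Selecting such a factor per tuple defines a map from the tuples to the set of $\lambda$ distinct linear factors of $\bl$, and pigeonhole produces a single linear factor $l$ that is selected by at least $\lceil \prod \deg \bx_i / \lambda \rceil$ tuples.

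Next I would upper-bound the number $N(l)$ of tuples $(y_1, \ldots, y_r)$ for which $l \in (y_1, \ldots, y_r)$. This containment means exactly that every variable occurring in $l$ with nonzero coefficient appears among $y_1, \ldots, y_r$. For $N(l) > 0$, all such variables must lie in $V_1 \cup \cdots \cup V_r$ with at most one coming from each $V_i$; writing $T(l)$ for the set of indices $i$ such that $V_i$ contains a variable of $l$, this gives $N(l) = \prod_{i \notin T(l)} \deg \bx_i$. The key observation is that the standing assumption forces $|T(l)| \geq 2$: if $l$ involved only one variable (which would then necessarily lie in some $V_i$ by the previous sentence), then that variable would divide $\bl$. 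Using the ordering $\deg \bx_1 \leq \cdots \leq \deg \bx_r$, we conclude $N(l) \leq \prod \deg \bx_i / (\deg \bx_1 \cdot \deg \bx_2)$, producing the desired contradiction.

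The two sufficient conditions are routine: $\deg \bx_1 \cdot \deg \bx_2 > \lambda$ gives $\prod \deg \bx_i / (\deg \bx_1 \cdot \deg \bx_2) < \prod \deg \bx_i / \lambda \leq \lceil \prod \deg \bx_i / \lambda \rceil$, and this is in turn implied by $\deg \bx_1 \cdot \deg \bx_2 > d$ since $\lambda \leq \deg \bl = d$. The only delicate step in the whole argument is establishing $|T(l)| \geq 2$, which is precisely what the negated conclusion supplies.
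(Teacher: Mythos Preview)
Your proof is correct and follows essentially the same substitution-and-pigeonhole argument as the paper: both pick one variable from each $\bx_i$, set them to zero to force a linear factor of $\bl$ into their span, pigeonhole over the $\prod_i \deg \bx_i$ choices to find a single factor $l$ caught by at least $\lceil \prod \deg \bx_i / \lambda \rceil$ tuples, and then bound the number of tuples that can catch a fixed $l$ to conclude that $l$ is (a scalar multiple of) a variable. Your contrapositive framing with the index set $T(l)$ and the explicit count $N(l)=\prod_{i\notin T(l)}\deg\bx_i$ makes the final step more transparent than the paper's terser ``the intersection of more than $\prod \deg \bx_i/(\deg\bx_1\cdot\deg\bx_2)$ choices contains at most one variable,'' but the idea is identical.
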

\begin{proof}
For each $\bx_i$, choose some variable $x_i$ dividing it. Setting $x_1=x_2=\ldots=x_r=0$ will result in the equality $\bl=0$, hence there exists one factor of $\bl$ depending only on $x_1,\ldots,x_r$. There are $\prod_i \deg \bx_i$ possible ways to choose the $x_i$, and $\lambda$ factors of $\bl$, so there must be one factor of $\bl$  which depends only on the $x_1,\ldots,x_r$ for 
 \[
\left\lceil \frac{\prod \deg \bx_i}{\lambda}\right\rceil 
\]
different choices. On the other hand, the intersection of more than 
\[
\frac{\prod \deg \bx_i}{\deg \bx_1 \cdot \deg \bx_2}
\]
choices of the $x_1,\ldots,x_r$ contains at most one variable. Hence, if the above inequality is satisfied, the claim follows.
\end{proof}

\begin{rem}
If $\bl=0$, the conclusion of the above lemma is trivial since every variable divides $\bl$.
\end{rem}

\begin{prop}\label{prop:rkonebasis}
Suppose
\[
\bl=f_1\bx_1+f_2\bx_2
\]
with $f_i$ forms of degree $(d-\deg \bx_i)$.
\begin{enumerate}
\item If $\deg \bx_1 + \deg \bx_2 \geq d+2$, then either $f_1$ or $f_2$ vanishes.\label{part:a1}
\item If $\bl$ is not squarefree and $\deg \bx_1 + \deg \bx_2 \geq d+1$, then either $f_1$ or $f_2$ vanishes.\label{part:a2}
\end{enumerate}

In particular, property $C^d_1$ holds for every $d>0$.
\end{prop}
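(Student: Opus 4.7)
The plan is to prove parts (1) and (2) simultaneously by induction on the degree $d$. The workhorses are Lemma \ref{lemma:divide}, which extracts a variable $x$ dividing both $\bl$ and one of the $\bx_i$, and the cancellation procedure of Remark \ref{rem:cancellation}, which uses such an $x$ to pass to a degree-$(d-1)$ or degree-$(d-2)$ instance. The base cases $d=1,2$ can be handled directly: the degree bound forces $\deg\bx_i = d$ for both $i$, making the $f_i$ constants, and the conclusion follows from unique factorization of $\bl$.

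For the inductive step of part (1), I would first note that $\deg f_i \geq 0$ together with $\deg\bx_1+\deg\bx_2\geq d+2$ forces $\deg\bx_i\geq 2$ for both $i$. Hence $\deg\bx_1\cdot\deg\bx_2 \geq \deg\bx_1+\deg\bx_2 \geq d+2 > d\geq \lambda$, so Lemma \ref{lemma:divide} applies and produces a variable $x$ dividing $\bl$ and, say, $\bx_1$. By coprimality $x\mid f_2$. If $f_2=0$ we are done; otherwise cancellation yields a degree-$(d-1)$ equation with the sum of $\bx$-degrees still at least $(d-1)+2$, and the inductive hypothesis applies. The edge case $\deg\bx_1=2$ forces $\deg\bx_2=d$ and $f_2$ to be a constant, so that $x\mid f_2$ immediately gives $f_2=0$.

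For the inductive step of part (2), I would first dispose of the degenerate subcase $\deg\bx_1=1$ (which forces $\deg\bx_2=d$ and $f_2$ a constant $c$): writing $\bx_1=y$ and assuming $c\neq 0$ for contradiction, either $y\mid \bl$ (forcing $c=0$ by coprimality), or restricting to $y=0$ gives $\bl|_{y=0}=c\bx_2$, which forces every linear factor of $\bl$ to take the form $x_{k_j}+c_j y$ for pairwise distinct variables $x_{k_j}$ appearing in $\bx_2$, making all factors of $\bl$ pairwise non-proportional and contradicting the non-squarefreeness of $\bl$. In the remaining case both $\deg\bx_i\geq 2$, and since $\lambda\leq d-1$ while $\deg\bx_1\cdot\deg\bx_2 \geq \deg\bx_1+\deg\bx_2-1 \geq d > \lambda$, Lemma \ref{lemma:divide} again produces an $x$ as before. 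After cancelling, we have a degree-$(d-1)$ equation with sum of $\bx$-degrees at least $d$. If $\bl/x$ is still not squarefree, the inductive hypothesis for part (2) at degree $d-1$ concludes; if $\bl/x$ is squarefree, then the repeated factor of $\bl$ must have been $x$ itself, so $x^2\mid \bl$, and a second application of Lemma \ref{lemma:dcoeff}(2) to $\bl/x$ forces $x\mid f_1$ and $x^2\mid f_2$. Extracting the full $x^2$ then produces a degree-$(d-2)$ equation whose $\bx$-degrees sum to at least $(d-2)+2$, so the inductive hypothesis for part (1) at degree $d-2$ concludes.

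The main obstacle is the bookkeeping in part (2): one must distinguish whether the repeated linear factor of $\bl$ survives cancellation of $x$, since the two branches respectively invoke the inductive hypothesis of part (2) at degree $d-1$ and the inductive hypothesis of part (1) at degree $d-2$. The degenerate case $\deg\bx_i=1$ also sits outside the main inductive framework and requires a separate combinatorial argument based on the specialization $\bl|_{y=0}$.
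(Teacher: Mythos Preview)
Your proof is correct and follows the same strategy as the paper: induction on $d$ via Lemma~\ref{lemma:divide} and cancellation, with part~(2) branching on whether $x^2\mid\bl$ and reducing to part~(1) in that case. The only differences are inessential: your separate treatment of $\deg\bx_1=1$ in part~(2) is unnecessary (since $\deg\bx_1\cdot\deg\bx_2>d-1$ holds even then, so Lemma~\ref{lemma:divide} still applies and the uniform argument goes through), and in the $x^2\mid\bl$ branch the paper lands in part~(1) at degree $d-1$ rather than $d-2$ by dividing $\bl,f_1,f_2$ (but not $\bx_i$) by $x$.
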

\begin{proof}
For the first case, the hypothesis  $\deg \bx_1 + \deg \bx_2\geq d+2$ implies in particular that $\deg \bx_1 \geq 2$ and $\deg \bx_1 \cdot  \deg \bx_2 >d$. Hence, 
 Lemma \ref{lemma:divide} implies the existence of a variable $x$ dividing both $\bl$ and one of the $\bx_i$. But then $f_j\bx_j$ is divisible by $x$ for $j\neq i$, hence $x$ divides $f_j$. Cancelling by $x$, we may proceed by induction on the degree $d$.

For the second case, we proceed with a similar argument. The inequality \[\deg \bx_1 + \deg \bx_2 \geq d+1\]implies that $\deg \bx_1  \cdot \deg \bx_2>d-1$, which is larger than or equal to the number of distinct factors of $\bl$. Thus, we again find a variable $x$ dividing both $\bl$ and one of the $\bx_i$. If in fact $x^2$ divides $\bl$, then after factoring out one power of $x$ from $\bx_i$ and $f_j$, Lemma \ref{lemma:dcoeff} guarantees that $x$ divides $f_i$ as well. Dividing $\bl$, $f_i$, and $f_j$ by $x$, we reduce to the first case.

If $x^2$ does not divide $\bl$, we may cancel by $x$ as in the first case, maintaining that $\bl/x$ is still not squarefree. To finish, we again proceed by induction on the degree $d$.
\end{proof}
\begin{rem}\label{rem:rkone}
It is clear that the degree bounds in Proposition \ref{prop:rkonebasis} cannot be improved upon. If $\deg \bx_1 + \deg \bx_2= d+1$ and $x_1,x_2$ are variables dividing $\bx_1,\bx_2$ respectively, then setting $f_i=\bx_i/x_i$ gives
\[
f_1\bx_1+f_2\bx_2=(x_1+x_2)\frac {\bx_1}{x_1}\frac{\bx_2}{x_2}.
\]
Likewise, if $\deg \bx_1+ \deg \bx_2\leq d$ there are non-trivial degree $d$ syzygies between $\bx_1$ and $\bx_2$, so we cannot expect the second claim to hold.
\end{rem}

We next prove a stronger version of $C_{k,1,2}^d$.

\begin{prop}\label{prop:rkone}
Suppose for $r\geq 3$, $d \geq 2$
\begin{equation}\label{eqn:form2}
\bl=\sum_{i=1}^r f_i\bx_i
\end{equation}
with $f_i$ being forms of degree $(d-\deg \bx_i)$. Then if $\deg \bx_1 + \deg \bx_{r-1}\geq d+1$, some $f_{i}$ with $\deg \bx_i\geq \deg \bx_{r-1}$ must vanish.

In particular, $C^d_{k,1,2}$ holds for every $d > 0$, $k \geq 0$.
\end{prop}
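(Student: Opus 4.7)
The plan is to argue by induction on $d$, following the two-case structure used in the proof of Proposition~\ref{prop:rkonebasis}. Small-$d$ base cases are handled directly: for $d$ sufficiently small, the degree constraints $\deg f_i = d - \deg \bx_i \geq 0$ combined with the inequality $\deg \bx_1 + \deg \bx_{r-1} \geq d+1$ force the vanishing of some $f_i$ for purely numerical reasons.

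For the inductive step, the main move is to produce a variable $x$ dividing both $\bl$ and some $\bx_i$, and then cancel it (Remark~\ref{rem:cancellation}) to pass to a lower-degree instance. When Lemma~\ref{lemma:divide} applies, it supplies such an $x$ immediately. I would then invoke a restricted form of the Lemma~\ref{lemma:dcoeff} argument, focused on the large indices $j \geq r-1$: for those $j$ and every $k \neq j$ we have $\deg \bx_j + \deg \bx_k \geq \deg \bx_{r-1} + \deg \bx_1 \geq d+1$, so no monomial cancellation can occur that involves $f_j \bx_j$, and consequently $x$ must divide either $\bx_j$ or $f_j$. After cancellation, the hypothesis $\deg \bx_1' + \deg \bx_{r-1}' \geq d$ of the reduced equation is preserved (possibly after re-sorting the $\bx_j$'s), and the induction closes.

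When Lemma~\ref{lemma:divide} does not apply -- which forces $\bl$ to have many distinct linear factors compared with $\deg \bx_1 \cdot \deg \bx_2$ -- I would fall back on a specialization argument. For each $i \leq r-2$ pick a variable $y_i$ appearing in $\bx_i$ and substitute $y_1 = \ldots = y_{r-2} = 0$. Since the $\bx_i$ are pairwise coprime, this kills the first $r-2$ summands without touching $\bx_{r-1}$ or $\bx_r$, while leaving $\bl$ a product of $d$ linear forms, so that
\[
\bl|_{y_1 = \ldots = y_{r-2} = 0} = f_{r-1}|_{\ldots}\,\bx_{r-1} + f_r|_{\ldots}\,\bx_r.
\]
When $\deg \bx_1 < \deg \bx_{r-1}$ strictly, the hypothesis forces $\deg \bx_{r-1} + \deg \bx_r \geq d+2$, so Proposition~\ref{prop:rkonebasis}(\ref{part:a1}) (applied to the specialized equation, provided its left-hand side is nonzero) yields that one of $f_{r-1}|_{\ldots}$, $f_r|_{\ldots}$ vanishes. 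Varying the choice of $y_i$ within the variables of each $\bx_i$ and combining the resulting vanishing statements should upgrade this to $f_{r-1}=0$ or $f_r=0$ outright. The remaining case $\deg \bx_1 = \deg \bx_{r-1}$ -- in which every $\deg \bx_i$ equals a common value $a \geq (d+1)/2$ -- is handled by a direct monomial analysis tracking the coefficients of the $\bx_i$ in the expansion $\bl = \ell_1\cdots\ell_d$, which shows that no two $f_i$ can be simultaneously nonzero once $d \geq 2$ and $2a \geq d+1$.

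The main obstacle I anticipate is the specialization step itself: one must ensure both that the substitution does not kill a linear factor $\ell_j$ of $\bl$ (in which case $\bl|_{\ldots} = 0$ and Proposition~\ref{prop:rkonebasis}(\ref{part:a1}) gives no information), and that the vanishing of $f_{r-1}|_{\ldots}$ or $f_r|_{\ldots}$ obtained after some particular choice of the $y_i$'s can be promoted to the true vanishing of $f_{r-1}$ or $f_r$, rather than merely membership in the ideal generated by the substituted variables. Both issues should be tractable by exploiting the freedom to vary each $y_i$ over all variables appearing in $\bx_i$ and pooling the resulting constraints.
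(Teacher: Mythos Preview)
There is a genuine gap in the specialisation step, and the overall induction scheme differs from what the paper does. The paper first absorbs variables to reduce to $\deg\bx_1=\cdots=\deg\bx_{r-2}=\alpha$ and $\deg\bx_{r-1}=\deg\bx_r=\beta$, handles $r=3$ by induction on $d$ (there every pair satisfies $\deg\bx_i+\deg\bx_j\geq d+1$, so Lemma~\ref{lemma:dcoeff}(2) applies to \emph{all} indices and full cancellation is legitimate), and then for $r\geq 4$ inducts on $r$, not on $d$.

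Your all-at-once specialisation only shows that for each tuple $(y_1,\ldots,y_{r-2})$ one of $f_{r-1},f_r$ lies in the ideal $(y_1,\ldots,y_{r-2})$; this does not promote. For instance with $r=4$, $\bx_1=x_1x_2$, $\bx_2=x_3x_4$, the forms $f_3=x_1$, $f_4=x_2$ pass this test for all four tuples yet neither vanishes. The paper's fix is to set a \emph{single} variable $x$ in some $\bx_i$ ($i\leq r-2$) to zero, leaving $r-1$ summands with the same $(\alpha,\beta,d)$; the inductive hypothesis on $r$ then yields $x\mid f_{r-1}$ or $x\mid f_r$ for that specific $x$. There are $\alpha(r-2)$ such variables but $\deg f_{r-1}+\deg f_r\leq 2(\alpha-1)$, forcing one of $f_{r-1},f_r$ to vanish. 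Your cancellation branch has a parallel defect: when the variable from Lemma~\ref{lemma:divide} lands in some $\bx_i$ with $i<r-1$, your restricted Lemma~\ref{lemma:dcoeff} argument gives $x\mid f_j$ only for $j\geq r-1$, so Remark~\ref{rem:cancellation} cannot be invoked and no reduced equation of the required shape exists. Finally, the assertion in your $\alpha=\beta$ case that no two $f_i$ are simultaneously nonzero is false already for $r=3$, $d=3$: with $\bx_1=x_1x_2$, $\bx_2=x_3x_4$, $\bx_3=x_5x_6$ one has $x_1x_3(x_2+x_4)=x_3\bx_1+x_1\bx_2+0\cdot\bx_3$.
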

\begin{proof}
Set $\alpha=\deg \bx_1$ and $\beta=\deg \bx_{r-1}$. It suffices to prove the proposition in the case that $\deg \bx_1=\ldots=\deg \bx_{r-2}=\alpha$ and $\deg \bx_{r-1}= \deg \bx_r=\beta$. Indeed, we may absorb variables from $\bx_2,\ldots,\bx_{r-2},\bx_r$ into the corresponding $f_i$ to reduce to this case. Henceforth we will assume we are in such a situation.

We begin by proving the claim when $r=3$ and $\alpha=1$, that is, $\bx_1$ is a single variable $x$. By using Proposition \ref{prop:rkonebasis}(\ref{part:a1}), we see that modulo $x$, either $f_2$ or $f_3$ must vanish. But since $\alpha+\beta\geq d+1$, $f_2$ and $f_3$ are both just constants, hence one must vanish outright.

Next, we consider the case when $r=3$ and $\alpha>1$. First, we show that some $f_i$ must vanish, with no restriction on its degree. We apply Lemma \ref{lemma:divide} to find a variable $x$ dividing some monomial $\bx_i$ and $\ell$. Applying Lemma \ref{lemma:dcoeff}, we may conclude that $x$ divides $f_j$ for $j\neq i$. In particular, if $d=2$, this implies that $f_j=0$ for $j\neq i$. For $d>2$, we may cancel by $x$ to reduce the degree by one and conclude by induction on degree that $f_i=0$ for some $i$.
Now we show that we can impose the desired degree restriction on $f_i$. Indeed, if $i=2,3$, or $i=1$ and $\alpha=\beta$, this is automatic. If instead $i=1$ and $\alpha<\beta$, then we have 
$\bl=f_2\bx_2+f_3\bx_3$ satisfying the hypotheses of Proposition \ref{prop:rkonebasis}(\ref{part:a1}), from which the claim follows.

It remains to consider the cases when $r\geq 4$. We will now induct on $r$. First assume that $\alpha<\beta$. By setting any variable $x$ in $\bx_i$ equal to zero for $i\leq r-2$, we reduce to an equation of the form \eqref{eqn:form2} with one fewer summand on the right hand side, yet $\alpha$, $\beta$, and $d$ the same. Hence, by induction, $x$ divides $f_{r-1}$ or $f_{r}$. Now, there are $\alpha\cdot (r-2)$ variables appearing in the $\bx_i$ for $i\leq r-2$, yet 
\[\deg f_{r-1}+\deg f_r=2(d-\beta)\leq 2(\alpha-1).\]
 Thus, if $r>3$ then either $f_{r-1}$ or $f_r$ must vanish.

If instead $r\geq 4$ and $\alpha=\beta$, we may again apply Lemma \ref{lemma:divide} followed by Lemma \ref{lemma:dcoeff} to find a variable $x$ dividing some $\bx_i$ and $f_j$ for $j\neq i$. We may reorder the monomials such that $i=1$, since all have the same degree $\alpha$.
Cancelling by $x$, we again find ourselves in the situation of \eqref{eqn:form2}, but now with $\alpha<\beta$, so by the above, $xf_{r-1}$ or $xf_{r}$ vanishes, thus $f_{r-1}$ or $f_r$ does as well.  
\end{proof}
\begin{rem}
Proposition \ref{prop:rkone} is sharp in the following sense. Suppose that in \eqref{eqn:form2}, we have $\deg \bx_1 + \deg \bx_{r-1} \leq d$. Then \emph{none} of the $f_i$ need vanish. Indeed, for $i=2,\ldots,r-1$ we can take $f_i=g_i\bx_1$ for any forms $g_i$ of degree $(d-\deg \bx_1 - \deg \bx_i)$, and 
\[f_1=\sum_{i=2}^{r-1}-g_i\bx_i.\]
Then 
\[\sum_{i=1}^r f_i\bx_i=f_r\bx_r\]
so if $f_r$ is a product of linear forms, then so is the whole sum, yet for appropriate choice of $g_i$ none of the $f_i$ will vanish. 
\end{rem}

\subsection{Property $C_2^d$}
We now move to the case of $C_2^d$:

\begin{prop}\label{prop:rktwobasis}
Suppose 
\begin{equation}\label{eqn:rktwobasis}
\bl_1+\bl_2=f_1\bx_1+f_2\bx_2+f_3\bx_3
\end{equation}
with $f_i$ forms of degree $(d-\deg \bx_i)$. If $\deg \bx_1+\deg \bx_2\geq d+2$, then either $f_1$, $f_2$, or $f_3$ vanishes.

In particular, $C^d_2$ holds for every $d > 0$.
\end{prop}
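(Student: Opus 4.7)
The plan is to proceed by induction on $d$. For the base case $d=2$, the hypothesis $\deg \bx_1 + \deg \bx_2 \geq 4$ combined with each $\deg \bx_i \leq 2$ forces every $\deg \bx_i = 2$, so the $f_i$ are scalars and the equation expresses a quadric of rank at most $4$ (the sum of two products of two linear forms) as the rank-$6$ quadric $f_1\bx_1+f_2\bx_2+f_3\bx_3$ in at least six variables; this is impossible unless some $f_i$ vanishes.

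For the inductive step I would dispose of two cleaner cases before tackling the main difficulty. First, if $\bl_1$ and $\bl_2$ share a common linear factor $\ell$, then $\ell$ divides $\sum f_i \bx_i$, so Lemma \ref{lemma:dcoeff}(1) applies (using $\deg \bx_1 + \deg \bx_2 \geq d+2$) and for each $i$ either $\ell \mid \bx_i$ or $\ell \mid f_i$. Applying Remark \ref{rem:cancellation} to cancel $\ell$ from both $\bl_1$ and $\bl_2$ together with the corresponding $\bx_i$ or $f_j$'s reduces to a degree-$(d-1)$ equation that still satisfies the hypothesis, and the inductive hypothesis concludes. Second, if some $\bx_i$ divides $\bl_1$ (or symmetrically $\bl_2$), write $\bl_1 = \bx_i \tilde{\bl}_1$ and rearrange to obtain $\bl_2 = g_1 \bx_1 + g_2 \bx_2 + g_3 \bx_3$ with $g_j = f_j$ for $j \ne i$ and $g_i = f_i - \tilde{\bl}_1$. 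Proposition \ref{prop:rkone}, applied to this single-product equation with $r=3$, forces some $g_j$ with $j\in\{2,3\}$ to vanish; this either gives $f_j = 0$ directly or yields $f_i = \tilde{\bl}_1$, in which case substituting back reduces the original equation to a two-term equation $\bl_2 = \sum_{j\ne i} f_j \bx_j$ that is handled by Proposition \ref{prop:rkonebasis}.

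The main obstacle is the residual case where $\bl_1$ and $\bl_2$ share no common linear factor and no $\bx_i$ divides either $\bl_j$. Here I plan to analyze the variables of the largest monomial $\bx_3$ by partitioning them into $A$ (those dividing $\bl_1$), $B$ (those dividing $\bl_2$), and $C$ (those dividing neither); these sets are disjoint since the common-factor case has been excluded. For each $x \in A$, setting $x=0$ annihilates both $\bl_1$ and the $f_3 \bx_3$ term and leaves a single-product-equals-two-terms equation $\bl_2|_{x=0} = f_1|_{x=0}\bx_1 + f_2|_{x=0}\bx_2$, to which Proposition \ref{prop:rkonebasis} applies and forces either $\bx_1 \mid \bl_2|_{x=0}$ or $\bx_2 \mid \bl_2|_{x=0}$; this produces a prescribed collection of linear factors of $\bl_2$ of the form $cy + bx$ with $y$ a variable of $\bx_1$ or $\bx_2$ and $c \ne 0$. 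A symmetric analysis for $x \in B$ produces such factors of $\bl_1$. The key quantitative step is to check that the factors produced from distinct pairs $(x,y)$ are genuinely distinct linear forms, then count them against the total of exactly $d$ linear factors in each of $\bl_1$ and $\bl_2$; combined with $\deg \bx_1 + \deg \bx_2 \geq d+2$, this bookkeeping should force $A = B = \emptyset$, i.e., $C = \bx_3$. The final subcase then requires handling the situation where no variable of $\bx_3$ divides either $\bl_j$: setting all variables of $\bx_3$ to zero simultaneously gives a new equation in fewer variables, and a short case analysis on whether $\bl_1|_{\bx_3=0}$ or $\bl_2|_{\bx_3=0}$ vanishes reduces to Proposition \ref{prop:rkonebasis} or to a coprimality-and-degree contradiction. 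The strict bound $d+2$ (rather than $d+1$) will be essential for closing the factor-counting argument, consistent with Remark \ref{rem:rkone}.
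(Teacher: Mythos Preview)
Your base case and the common-factor reduction are correct and match the paper. Your second reduction (when some $\bx_i$ divides $\bl_1$ or $\bl_2$) is also correct; the paper does not isolate this case, but it does no harm.

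The gap is in the residual case. Two concrete problems:
\begin{itemize}
\item The distinctness claim fails. If $x,x'\in A$ both force, say, $\bx_2\mid \bl_2|_{x=0}$ and $\bx_2\mid \bl_2|_{x'=0}$, then for a variable $y$ of $\bx_2$ the corresponding factors of $\bl_2$ are $cy+bx$ and $c'y+b'x'$; these coincide precisely when $b=b'=0$, i.e.\ when $y$ itself divides $\bl_2$. Your exclusion ``$\bx_2\nmid\bl_2$'' only rules out \emph{all} $b$'s vanishing simultaneously, not individual ones. So the count of factors of $\bl_2$ you obtain from $A$ can collapse, and nothing in your bookkeeping forces $A=B=\emptyset$: already with a single $x\in A$ you get at most $\beta$ constrained factors of $\bl_2$, leaving $d-\beta\ge 0$ unconstrained.
\item The final subcase does not close. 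If $A=B=\emptyset$ and you set every variable of $\bx_3$ to zero, you obtain $\bar\bl_1+\bar\bl_2=\bar f_1\bx_1+\bar f_2\bx_2$. This is an $m=2$, $n=2$ equation: neither Proposition~\ref{prop:rkonebasis} nor Proposition~\ref{prop:rkone} applies, and in general no $\bar f_i$ need vanish (e.g.\ $\bar\bl_1=\bar f_1\bx_1$, $\bar\bl_2=\bar f_2\bx_2$). So this does not yield $f_i=0$.
\end{itemize}

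For comparison, the paper handles the residual case differently. Instead of singling out variables of $\bx_3$, it sets \emph{each linear factor} $l$ of $\bl_1$ or $\bl_2$ to zero and uses Propositions~\ref{prop:rkonebasis} and~\ref{prop:rkone} to show $l$ divides some $f_i$. This bounds the total number of distinct linear factors of $\bl_1$ and $\bl_2$ by $\deg f_1+\deg f_2+\deg f_3\le 2d-\beta-2$. The paper then splits into the cases (i) one $\bl_i$ is squarefree (so the other has at most $d-\beta-2$ distinct factors) and (ii) both $\bl_i$ contain a square, and in each case runs a pigeonhole argument in the spirit of Lemma~\ref{lemma:divide} to force $\bl_1$ and $\bl_2$ to share a linear factor, contradicting the standing assumption. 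That global factor count over all of $\bl_1,\bl_2$ (rather than over the variables of a single $\bx_i$) is the missing idea in your plan.
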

\begin{proof}
We will prove the statement by induction on the degree $d$. For $d=1$ there is nothing to prove. If we can show that $\bl_1,\bl_2$ have a common factor $l$, then we are done. Indeed, by Lemma \ref{lemma:dcoeff}, $l$ must divide either each $\bx_i$ or $f_i$. Pulling $l$ out of each $f_i$ where we can, and out of $\bx_i$ in at most one position, allows us to ``cancel by $l$'' in a fashion similar to Remark \ref{rem:cancellation}. We thus reduce the degree and the claim follows by induction.

In the following, we will assume that no common factor $l$ of $\bl_1$ and $\bl_2$ exists, and that all $f_i$ are non-zero. 
For simplicity, we may assume that $\deg \bx_2= \deg \bx_3$, since this case implies the more general one. We denote $\deg \bx_1$ by $\alpha$, and $\deg \bx_2$ by $\beta$. Our hypothesis on degrees is now simply $\alpha+\beta\geq d+2$.

Consider any factor $l$ of $\bl_1$ or $\bl_2$. Setting $l=0$, we reduce to the case of Proposition \ref{prop:rkonebasis}(\ref{part:a1}) (if $l$ divides some $\bx_i$) or Proposition \ref{prop:rkone} (by absorbing into some $f_i$ a variable of $\bx_i$ appearing in $l$). In either case, we see that modulo $l$, some $f_i$ must vanish, that is, $l$ is a factor of $f_i$. We may proceed to do this for all distinct divisors of $\bl_1$ and $\bl_2$. But since \[\deg f_1+\deg f_2+\deg f_3 \leq 2(d-\beta)+(d-\alpha),\] we conclude that together $\bl_1$ and $\bl_2$ have at most 
\[
2(d-\beta)+(d-\alpha)\leq 2d-\beta-2
\]
distinct factors.
It follows that either both $\bl_1$ and $\bl_2$ contain a square, or else that the non-squarefree product has at most $d-\beta-2$ distinct factors.

Assume first that $\bl_1$ is squarefree, and fix some factor $l$. We now argue in a similar fashion to the proof of Lemma \ref{lemma:divide}.
For each $\bx_1,\bx_2,\bx_3$, fix a variable $y_i$ so that $l$ and all remaining variables are linearly independent. 
For each $\bx_i$, choose some variable $x_i\neq y_i$ dividing $\bx_i$. Setting $x_1=x_2=x_3=0$ will result in the equality $\bl_1=-\bl_2$, hence one factor of $\bl_2$ is zero modulo $x_1,\ldots,x_m,l$. 
 There are $(\alpha-1)(\beta-1)^2$ possible ways to choose the $x_i$, and at most $d-\beta-2$ factors of $\bl_2$, so there must be one fixed factor of $\bl_2$  which is zero mod $x_1,\ldots,x_m,l$ for
 \[
\left\lceil \frac{(\alpha-1)(\beta-1)^2}{d-\beta-2}\right\rceil 
\]
different choices. On the other hand, the intersection of more than $\beta^2$ 
choices of the $x_1,\ldots,x_m$ contains no variable. Hence, since $d-\beta-2<\alpha-1$ it follows that there is a factor of $\bl_2$ which is zero modulo $l$, that is, agrees with it.

We now instead assume that both $\bl_1$ and $\bl_2$ contain  factors with multiplicity at least two. 
Consider any factor $l$ of $\bl_1$ or $\bl_2$. As long as $l$ is not a variable in $\bx_2$ or $\bx_3$, we may set $l=0$ and  conclude that $l$ divides $f_2$ or $f_3$. Indeed, if $l$ divides  $\bx_1$ this follows from Proposition \ref{prop:rkonebasis}. Otherwise we may absorb into some $f_i$ a variable of $\bx_i$ made linear dependent modulo $l$, and then apply Proposition \ref{prop:rkone} followed by Proposition \ref{prop:rkonebasis}(\ref{part:a2}) to conclude that two of $f_1$, $f_2$, and $f_3$ vanish modulo $l$.

If at most one factor $l$ of $\bl_1$, $\bl_2$ divides $\bx_2$ or $\bx_3$ but not $f_2$ or $f_3$, we thus obtain that $\bl_1,\bl_2$ have at most $1+2(d-\beta)$ distinct factors. But then either $\bl_1$ or $\bl_2$ has at most $d-\beta$ distinct factors, so an argument similar to the previous case where $\bl_1$ squarefree above shows that $\bl_1$ and $\bl_2$ would have to possess a common factor.

So we now finally consider the case that at least two distinct factors $x,y$ of $\bl_1,\bl_2$ are variables found in $\bx_2$ and $\bx_3$, neither dividing $f_2$ or $f_3$. 
It follows by Proposition \ref{prop:rkonebasis} that each such factor must divide $f_1$. Without loss of generality, we assume that $x$ divides $\bx_3$ and $\bl_1$. 
We obtain
\begin{align*}
\bl_2\equiv f_2\bx_2 \mod x,
\end{align*}
so $\bl_2$ has $\beta$ factors which only depend on $x$ and a single variable of $\bx_2$.

If $y$ divides $\bx_3$ and $\bl_2$, then setting $x=y=0$, we obtain $f_2\bx_2\equiv 0$ $\mod x,y$, a contradiction. 
If instead $y$ divides $\bx_3$ and $\bl_1$
we obtain
\begin{align*}
\bl_2\equiv f_2\bx_2 \mod y
\end{align*}
and $\bl_2$ has $\beta$ factors which only depend on $y$ and a single variable of $\bx_2$.
Since $\bl_2$ has $d<2\beta$ factors, one must also just be a variable $w$ of $\bx_2$. So in this case, we conclude that a variable $w$ of $\bx_2$ divides $\bl_2$.
If instead $y$ divides $\bx_2$, we see by setting $x=0$ that $y$ must divide $\bl_2$, so we can take $w=y$ to produce $w$ as above. 

We thus may assume that we are in the situation of variables $x,w$ with $x$ dividing $\bx_3$ and $\bl_1$, and $w$ dividing $\bx_2$. By Proposition \ref{prop:rkonebasis}, $w$ divides $f_j$ for $j=1$ or $j=3$. 
Now let $k\in\{1,3\}$ be such that $i\neq j$. 
We thus obtain
\begin{align*}
\bl_1\equiv f_k\bx_k \mod w,
\end{align*}
hence $\bl_1$ has $\deg \bx_k $ factors which depend on $w$ and a single variable of $\bx_k$.

The right hand side of Equation \eqref{eqn:rktwobasis} clearly contains monomials divisible by $\bx_j$. But the left hand side cannot: while each monomial of $\bl_1$ has degree at least $\deg \bx_k$ in the variables of $\bx_k$ and $w$, and each monomial of $\bl_2$ has degree at least $\beta=\deg \bx_2$ in the variables of $\bx_2$ and $x$, the part of $\bx_j$ relatively prime to $x$ has degree at least $\deg \bx_j-1$. The inequality
\[
\deg \bx_j-1+ \deg \bx_k\geq d+1
\]
then shows that this impossible. We conclude that in fact some $f_i$ must equal zero. 
\end{proof}

\begin{rem}
Proposition \ref{prop:rktwobasis} is optimal. Indeed, suppose that $\deg \bx_1 + \deg \bx_2 \leq d+1$. Then by Remark \ref{rem:rkone}, for appropriate non-vanishing choices of $f_1,f_2$, $f_1\bx_1+f_x\bx_2$ is a product of linear forms, so $f_1\bx_1+f_2\bx_2+f_3\bx_3$ is a sum of two products of linear forms for any choice of $f_3$.
\end{rem}

\subsection{Property $C_m^d$ for $d\leq 4$}
We now prove a lemma that will help us with the degree four case:

\begin{lemma}\label{lemma:help}
Fix $d\geq 2$, $k>0$, and $m>0$ and let $n=m+1$. 
Assume that (\ref{eq:3}) and (\ref{eq:4}) hold and that $C_{k',m',n'}^d$ holds whenever $m'<m$, or whenever $m'=m$ and $k'<k$.
In Equation \eqref{eq:2}, consider any linear form $l$ dividing $p$ of the summands $\bl_i$ on the left hand side. Then $l$ must also divide $p$ of the $f_i$ with $i>k$.
\end{lemma}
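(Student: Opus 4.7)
The plan is to reduce the equation modulo $l$ and apply the inductive hypothesis. Without loss of generality, reindex so that $l \mid \bl_j$ exactly for $j = 1, \ldots, p$; since the case $p = 0$ is trivial, assume $p \geq 1$. Since $m - p < m$, the inductive hypothesis gives us $C^d_{k', m-p, n'}$ for any admissible $(k', n')$, provided the reduced equation actually fits the hypotheses \eqref{eq:3} and \eqref{eq:4} with those parameters. The argument is a case analysis on where the variables of $l$ lie.

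First, if some variable $y^*$ of $l$ does not appear in any $\bx_i$, I would change coordinates to make $l = y^*$---this leaves every $\bx_i$ untouched---and then set $y^* = 0$. The result is an equation of the form \eqref{eq:2} with $m - p$ products of linear forms on the left, the same $\bx_i$'s on the right, and all degree conditions inherited. Applying $C^d_{k, m-p, n}$ produces $p + 1$ indices $i > k$ for which $\bar f_i = 0$, so $l \mid f_i$ for at least $p$ of them.

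Next, suppose every variable of $l$ lies in some $\bx_{i_0}$. If we can choose such $i_0 > k$, I would absorb $y^* \in \bx_{i_0}$ into $f_{i_0}$ by setting $\bx'_{i_0} := \bx_{i_0}/y^*$ and $f'_{i_0} := y^* f_{i_0}$, then change coordinates and set $y^* = 0$. The new monomial $\bx'_{i_0}$ has degree $\deg \bx_{i_0} - 1$, which I would accommodate by reclassifying $i_0$ as a low-index, passing to parameters $(k+1, m-p, m)$. The key check: for a remaining high-index $i$, the new condition \eqref{eq:3} with $j = i_0$ reads $\deg \bx_i + \deg \bx'_{i_0} \geq (d+2) - 1 = d+1$, which follows from the original \eqref{eq:4}. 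Applying $C^d_{k+1, m-p, m}$ then yields $p$ old-high-indices other than $i_0$ with $l \mid f_i$, establishing the claim in this case.

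I expect the main obstacle to be the final subcase, where every variable of $l$ lies only in low-index monomials $\bx_{i_0}$ (so $i_0 \leq k$). Absorption here shrinks a low-index degree and can violate \eqref{eq:3} for the ``tight'' high-indices $T := \{i > k : \deg \bx_i + \deg \bx_{i_0} = d+1\}$. I would reclassify the elements of $T$ simultaneously as low-indices, passing to parameters $(k + |T|, m - p, n - |T|)$, verify the new degree conditions by an analogous calculation, and then need to argue---using the structure of the original equation---that $l \mid f_i$ for each $i \in T$ as well. This last structural step, extracting divisibility for the reclassified indices, is the most delicate part of the bookkeeping.
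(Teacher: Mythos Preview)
Your first two cases are correct and match the paper (which organizes the split slightly differently, first asking whether $l$ is itself a variable of some $\bx_j$). The genuine gap is in your final subcase, and it is not just bookkeeping: the route you sketch does not close. Two concrete obstructions. First, invoking $C^d_{k+|T|,\,m-p,\,m+1-|T|}$ requires $n'>m'$, i.e.\ $|T|\le p$, and nothing in the setup bounds $|T|$---every high index could be tight with respect to $i_0$. Second, even granting $|T|\le p$, there is no mechanism for obtaining $l\mid f_i$ when $i\in T$: the tightness condition $\deg\bx_i+\deg\bx_{i_0}=d+1$ is purely numerical and carries no information about divisibility of $f_i$ by $l$, so the ``structure of the original equation'' you appeal to does not supply the missing $|T|$ divisibilities.

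The paper handles this subcase by a different and simpler move. Instead of reclassifying high indices as low, it removes the offending low-index term $f_{i_0}\bx_{i_0}$ from the right-hand side altogether and brings it to the \emph{left}, counting it as one additional summand. This passes to parameters $(k-1,\,m-p+1,\,m+1)$: the low count drops by one, the high indices and the degree conditions \eqref{eq:3}--\eqref{eq:4} among the remaining right-hand terms are inherited unchanged, and the left side gains a term. Since either $m-p+1<m$ (when $p\ge 2$) or $m-p+1=m$ with $k-1<k$ (when $p=1$), the inductive hypothesis applies directly and yields exactly $(m+1)-(m-p+1)=p$ indices $i>k$ with $l\mid f_i$, with no residual work.
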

\begin{proof}
Assume $l$ divides some factor $x_j$ of $\bx_j$. Setting $l=0$, we now have that the hypothesis for $C^d_{k,m-p,m}$ is fulfilled. Since we have assumed that $C^d_{k,m-p,m}$ is true, $p$ of the $f_i$ with $i > k$ must vanish modulo $l$.

Even if $l$ does not divide any $\bx_j$, we still may set $l=0$, modifying the right hand side of the equation $\bl_1+\ldots+\bl_m=\sum_{i=1}^{k+n} f_i\bx_i$
to replace one factor of some $\bx_j$ by a linear form $f$ which is no longer a monomial. Now we have to distinguish two cases. Let us assume first that $j > k$. Then, since the degree of $\bx_j$ drops by one, we are in the situation of $C^d_{k+1,m-p,m}$. As before by our assumption,  $p$ of the $f_i$ with $i > k$ must vanish modulo $l$. 

If $j \leq k$ then the fact that the degree of $\bx_j$ drops may violated condition (\ref{eq:4}). However, we may bring the summand $f_j \bx_j$ to the left hand side of the equation. This leaves us in the situation of $C^d_{k-1,m-p+1,m+1}$ and our assumption again provides the vanishing of $p$ of the $f_i$, with $i > k$.
\end{proof}

We now use this lemma to show $C_m^d$ for arbitrary $m$ and $d\leq 4$:

\begin{prop}\label{prop:degfour}
If $d\leq 4$, property $C_{k,m,n}^d$ holds for arbitrary $k>0$ and $n>m>0$. In particular, $C_m^d$ holds for $m>0$.
\end{prop}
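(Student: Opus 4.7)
The plan is to use strong induction in lexicographic order on $(m,k)$. The base cases $m=1$ (Proposition~\ref{prop:rkone}) and $m=2$ (Proposition~\ref{prop:rktwobasis}) are already established, and Lemma~\ref{lem:multiple-vanishing} upgrades each to arbitrary $n>m$. For the inductive step I fix $m\geq 3$, $d\leq 4$, and $k\geq 1$, and by Lemma~\ref{lem:multiple-vanishing} it suffices to prove $C^d_{k,m,m+1}$.

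The induction hypothesis matches precisely the hypothesis of Lemma~\ref{lemma:help}, so its conclusion is at our disposal: every linear form $l$ dividing $p$ of the summands $\bl_i$ in \eqref{eq:2} also divides at least $p$ of the $f_i$ with $i>k$. I would argue by contradiction, assuming no $f_i$ with $i>k$ vanishes. For each distinct linear form $l$ appearing in some $\bl_i$, let $p_l$ denote the number of summands it divides. Counting incidences between the distinct factors of the $\bl_i$ and the linear factors of the $f_i$ with $i>k$ yields
\[
\sum_l p_l \;\leq\; \sum_{i>k}\deg f_i.
\]
Since each $\bl_i$ is a nonzero degree-$d$ product of linear forms, it has at least one distinct linear factor, so the left-hand side is at least $m$. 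On the other hand, condition~\eqref{eq:4} forces $\deg\bx_i\geq\lceil(d+2)/2\rceil$ for all but at most one index $i>k$, which bounds the right-hand side by $1$ when $d=3$ and by $m+1$ when $d=4$. For $d=3$ this immediately contradicts $m\geq 3$.

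The main obstacle is the $d=4$ case, where the degree bound is loose. To close the argument I would analyse the remaining near-tight regime: the estimate forces almost every $\bl_i$ to be a pure power of a single linear form chosen from a tiny shared collection of factors of the $f_i$, and forces each such $f_i$ to be a product of those same linear forms. I would then exploit the pairwise coprime squarefree structure of the $\bx_i$, iterate Lemma~\ref{lemma:help} applied to linear combinations of these factors, and perform cancellation as in Remark~\ref{rem:cancellation} to reduce to an instance already covered by Proposition~\ref{prop:rkone}, Proposition~\ref{prop:rktwobasis}, or a previously settled case of $C^d_{k',m',n'}$ with $(m',k')<(m,k)$ lex; this produces a vanishing $f_i$ contrary to assumption.

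Finally, the ``in particular'' claim $C_m^d=C^d_{0,m,m+1}$ holds because the same induction applies at $k=0$: the only step in the proof of Lemma~\ref{lemma:help} that requires $k\geq 1$ (the case $j\leq k$) simply does not arise, while the incidence count and the $d=4$ case analysis above transfer without change.
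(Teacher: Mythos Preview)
Your overall architecture---lexicographic induction on $(m,k)$, invoking Lemma~\ref{lemma:help}, and the degree count $\sum_l p_l \leq \sum_{i>k}\deg f_i \leq m+1$---matches the paper's approach, and your $d\leq 3$ case is correct. There are, however, two genuine gaps.

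\textbf{The $m=2$ base case is not established.} Proposition~\ref{prop:rktwobasis} proves only $C^d_2=C^d_{0,2,3}$; it says nothing about $C^d_{k,2,3}$ for $k\geq 1$, and Lemma~\ref{lem:multiple-vanishing} requires $C^d_{k,2,3}$ for \emph{every} $k\geq 0$ before it upgrades to general $n$. So you cannot take $m=2$ as a base case. This is not cosmetic: the very hypothesis of Lemma~\ref{lemma:help} at level $(m,k)$ demands $C^d_{k',2,n'}$ for all $k'$, so without it your inductive step for $m\geq 3$ does not even get off the ground. The paper starts the induction at $m=1$ only (where Proposition~\ref{prop:rkone} genuinely gives all $k$) and treats $m=2$ inside the inductive step with a separate argument.

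\textbf{The $d=4$ endgame is missing its key idea.} Your counting does yield that all but one $\bl_i$ is a pure power $l_i^4$, but the tools you name to finish---cancellation in the sense of Remark~\ref{rem:cancellation} and ``iterating Lemma~\ref{lemma:help} on linear combinations''---do not lead anywhere obvious: Remark~\ref{rem:cancellation} is about a single product, not a sum, and Lemma~\ref{lemma:help} gives divisibility information you have already exhausted. The paper's move is different and decisive: for $m>2$ one has at least two pure powers $l_i^d,l_j^d$, and over an algebraically closed field $l_i^d+l_j^d$ factors as a product of $d$ linear forms, so the left side of \eqref{eq:2} becomes a sum of $m-1$ products and one invokes $C^d_{k,m-1,m}$. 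For $m=2$ (which, per the first point, you must handle) the paper instead sets $l_2=\lambda l_3$ for varying $\lambda\in\KK$, forcing the left side to a single product and obtaining infinitely many linear factors $(l_2-\lambda l_3)$ among the finitely many factors of the $f_i$, whence some $\bl_j$ vanishes. Neither of these steps is suggested by your sketch.
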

\begin{proof}
  By Lemma~\ref{lem:multiple-vanishing} it is enough to show that $C^d_{k,m,m+1}$ holds for all $m,k>0$. Now, we prove $C^d_{k,m,m+1}$ by induction on $m$ and $k$.
Note that, for $k$ arbitrary, $C^d_{k,1,2}$ follows from Proposition~\ref{prop:rkone}. Moreover, Lemma~\ref{lem:multiple-vanishing} then provides  $C^d_{k,1,n}$ for arbitrary $n > 1$.
  
Assume we have proven property $C^d_{k',m',m'+1}$ is true whenever $m' < m$, or whenever $m'=m$ and  $k' < k$.  For $d \leq 4$ we have $\sum_{i>k} \deg f_i \leq m+1$. 
Either one of the $f_i$ has to vanish, which would prove our claim, or by Lemma \ref{lemma:help}, all the linear factors of the $\bl_i$ occur as one of the  (at most) $(m+1)$ linear factors of the $f_i$ for $i>k$. Here, by a \emph{linear factor} we we mean an equivalence class of linear forms, where two linear forms are equivalent if one is a non-zero scalar multiple of the other.

By the above the linear factors of the $f_i$ form a multiset $L$ of cardinality at most $m+1$ and every $\bl_j$  is divisible by one of the elements of $L$. On the other hand, we have seen by Lemma \ref{lemma:help} that every $l\in L$ may divide at most $m(l)$ of the $\bl_j$, where $m(l)$ denotes the multiplicity of $l$ in $L$. Since $\# L \leq (m+1)$, there can be at most one $\bl_j$ which is divisible by more than one linear factor. This implies, we have $\bl_i = l_i^d$ for all but one of the summands on the left hand side. 

For $m > 2$ this implies that we can write the left hand side as the sum of only $(m-1)$ products, since we can write $l_i^d + l_j^d$ as a product of linear forms. Then using the induction hypothesis for $C^d_{k,m-1,m}$ concludes the proof for the case $m > 2$. 

To conclude, we consider the case $m=2$. If none of the $f_i$ (with $i > k$) vanishes, we have seen that at most three linear factors $l_1$, $l_2$ and $l_3$ can occur on the left hand side. We choose $\lambda \in \KK$ and set $l_2 = \lambda l_3$. Now, the left hand side depends only on two linear forms. In this situation the left hand side is actually a product of linear forms (since $\KK$ is algebraically closed). In the same way as in the proof of Lemma \ref{lemma:help} (when setting one of the $l_i$ to zero) we see by the induction hypothesis that one of the $f_i$ has to be divisible by $(l_2 - \lambda l_3)$. We have only finitely many choices for the linear factors of the $f_i$, but we have infinitely many choices for $\lambda \in \KK$. Hence, there are $\lambda, \lambda' \in \KK$ with $(l_2 - \lambda' l_3)$ dividing  $(l_2 - \lambda l_3)$, which implies either $l_2 = 0$ or $l_3=0$. In any case, one of the summands $\bl_1$ or $\bl_2$ has to vanish, and we are in the case of $C^d_{k,1,3}$.
\end{proof}

\subsection{Consequences}
The following lemma derives a consequence of the property $C^d_m$ which will be used later.

\begin{lemma}\label{lemma:uniqueness}
Consider an equation of the form 
\begin{equation}\label{eqn:unique}
\bl_1+\ldots+\bl_m=\sum_{i=1}^m\bx_i
\end{equation}
where the $\bl_i$ are degree $d\geq 3$ products of linear forms, and the $\bx_i$ are pairwise relatively prime squarefree monomials of degree $d$. If property $C_{m-1}^d$ is true, then there is a permutation $\sigma\in S_m$ such that $\bl_i=\bx_{\sigma(i)}$ for all $i$.
\end{lemma}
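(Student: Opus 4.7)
The proof is by induction on $m$. The base case $m=1$ is immediate: a degree-$d$ product of linear forms equal to the squarefree monomial $\bx_1$ must coincide with $\bx_1$. For the inductive step, I first observe that $C_{m-1}^d$ implies $C_{m'}^d$ for every $m'\le m-1$; indeed, given any instance of $C_{m'}^d$, one can append $(m-1-m')$ trivial identities $y_{k,1}\cdots y_{k,d}=y_{k,1}\cdots y_{k,d}$, built from fresh variables disjoint from everything in play, to arrive at an instance of $C_{m-1}^d$. Its conclusion, combined with the fact that the newly appended coefficients all equal $1$, forces the vanishing of one of the original $f_i$. Hence the inductive hypothesis of the lemma is available for all smaller values of $m$.

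The key reduction is the following. Suppose one can exhibit indices $i_0,j_0$ and a scalar $c$ with $\bl_{i_0}=c\,\bx_{j_0}$. Rearranging yields
\[
\sum_{i\neq i_0}\bl_i \;=\; \sum_{j\neq j_0}\bx_j \;+\; (1-c)\,\bx_{j_0},
\]
an equation of $m-1$ products of linear forms equal to a sum of $m$ pairwise coprime squarefree monomials of degree $d$; the condition $\deg\bx_i+\deg\bx_j=2d\ge d+2$ holds because $d\ge 3$. Applying $C_{m-1}^d$ forces one of the coefficients to vanish, and since $m-1$ of them equal $1$, we must have $1-c=0$, so $\bl_{i_0}=\bx_{j_0}$. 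Subtracting yields an equation of the same form with $m-1$ terms on each side, to which the inductive hypothesis applies. It therefore suffices to exhibit one such pair $(i_0,j_0)$.

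To produce this pair, I argue by contradiction: assume that no $\bl_i$ is a scalar multiple of any $\bx_j$. Then every $\bl_i$ either has a non-monomial linear factor, has a repeated variable factor, or is a squarefree monomial different from every $\bx_j$. The plan is to pick some $\bl_{i_0}$ and annihilate it by a linear substitution — setting a variable $v$ equal to a linear combination of others — chosen so that each $\bx_j$ is either preserved or absorbed in a controlled way. When $\bl_{i_0}$ has a linear factor involving a variable outside $V_1\cup\cdots\cup V_m$ (where $V_j$ is the variable support of $\bx_j$), the substitution can be arranged to leave every $\bx_j$ untouched, so the resulting equation has $m-1$ products of linear forms on the left equal to $m$ unchanged pairwise coprime squarefree degree-$d$ monomials on the right; applying $C_{m-1}^d$ then forces one of the coefficients $f_j=1$ to vanish — a contradiction. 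The main obstacle is the case where every linear factor of $\bl_{i_0}$ is supported entirely inside $V_1\cup\cdots\cup V_m$. Here the substitution necessarily involves a variable $v$ belonging to some $V_{j_1}$, so $\bx_{j_1}=v\cdot m_{j_1}$ gets transformed into $f'_{j_1}\cdot m_{j_1}$ with $m_{j_1}$ a squarefree monomial of degree $d-1$ and $f'_{j_1}$ a non-trivial linear form; the degree condition $(d-1)+d=2d-1\ge d+2$ still holds for $d\ge 3$, so $C_{m-1}^d$ applies to the modified equation and forces $f'_{j_1}=0$, contradicting the non-monomial nature of the chosen factor of $\bl_{i_0}$. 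Some care is needed to guarantee that the substitution annihilates only $\bl_{i_0}$ and not additional $\bl_i$'s simultaneously, which I handle by a sufficiently generic choice of factor and substituted variable within the constraints above.
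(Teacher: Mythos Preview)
Your argument shares the paper's core mechanism---killing a linear factor and invoking $C_{m-1}^d$---but it is incomplete. In your Case~B you conclude that $C_{m-1}^d$ forces $f'_{j_1}=0$, ``contradicting the non-monomial nature of the chosen factor of $\bl_{i_0}$.'' That contradiction is only available when the chosen factor really is non-monomial. You state the trichotomy (non-monomial factor / repeated variable / squarefree monomial distinct from each $\bx_j$) but your substitution argument handles only the first alternative. If every linear factor of $\bl_{i_0}$ is a variable in $V_1\cup\cdots\cup V_m$, then solving for that variable sets $f'_{j_1}=0$ from the outset, and $C_{m-1}^d$ gives you nothing new. So your proof, as written, does not exclude the possibility that every $\bl_i$ is a scalar multiple of a monomial in the $\bx_j$-variables yet none equals $c\,\bx_j$.

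The paper's proof avoids this gap by reversing the logic: rather than seeking one $\bl_{i_0}=c\,\bx_{j_0}$ and inducting on $m$, it applies your Case~A/B argument to \emph{every} linear factor $l$ of \emph{every} $\bl_i$, deducing that each such $l$ must already be a variable appearing in some $\bx_j$. Once every $\bl_i$ is known to be a scalar multiple of a monomial in those variables, a direct monomial comparison in $\sum c_i m_i=\sum\bx_j$ (the $\bx_j$ being $m$ distinct monomials) forces the $m_i$ to be a permutation of the $\bx_j$ with all $c_i=1$. This finishes in one stroke, with no induction on $m$ and no need for the reduction $C_{m-1}^d\Rightarrow C_{m'}^d$. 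Your proof is easily repaired by inserting exactly this monomial comparison to cover the residual case; once that is done, the two arguments are essentially the same, though yours takes a longer route. Incidentally, your closing concern about the substitution possibly annihilating additional $\bl_i$'s is unnecessary: extra vanishing on the left only helps, since zero is still a product of linear forms and $C_{m-1}^d$ applies verbatim.
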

\begin{proof}
Consider any factor  $l_i$ of some $\bl_i$. If $l_i$ does not divide any $\bx_j$, we may set $l_i=0$, modifying the right hand side of Equation \eqref{eqn:unique} to replace one factor of some $\bx_j$ by a linear form $f$ which is no longer a monomial. But this equation still satisfies the hypotheses necessary for $C_{m-1}^d$, as long as $d\geq 3$, so in fact, $l_i$ must have divided one of the $\bx_j$ all along.

We thus see that every factor of each $\bl_i$ is just a variable, up to scaling. By comparing the monomials on both sides of \eqref{eqn:unique}, we find the desired permutation.
\end{proof}

\begin{rem}
We may interpret the above lemma geometrically as saying that, if $C_d^{r-1}$ is true, then the subgroup of $PGL(rd-1)$ taking $X_{r,d}$ to itself is generated by the semidirect product of the torus 
\[
T=\{x_{11}\cdots x_{1d}=x_{21}\cdots x_{2d}=\ldots=x_{r1}\cdots x_{rd}\}
\]
with the copy of the symmetric group $S_r$ permuting the indices $i$ of $x_{ij}$, and the $r$ copies of $S_d$ permuting the indices $j$ of $x_{ij}$ for some fixed $1\leq i \leq r$.
\end{rem}

\section{Fano Schemes and Splitting}\label{sec:fano}
\subsection{Main results}\label{sec:mainproof}
In this section, we will prove Theorems \ref{thm:reduction} and \ref{thm:conj}.
For $n=rd-1$, consider projective space $\PP^{n}$ with coordinates $x_{ij}$, $1\leq i \leq r$ and $1\leq j \leq d$.
Let $L$ be a $k$-dimensional linear subspace of $\PP^{n}$. We may represent $L$ as the rowspan of a full rank $(k+1)\times rd$ matrix $B=(b_{\alpha,ij})$, with rows indexed by $\alpha=0,\ldots ,k$ and column $ij$ corresponding to the homogeneous coordinates $x_{ij}$ on $\PP^{n}$.
We define linear forms $y_{ij}$ in $S=\KK[z_0,\ldots, z_k]$ by 
\[y_{ij}=\sum_\alpha b_{\alpha,ij}z_\alpha,
\]
along with degree $d$ forms 
\[
\by_i=\prod_{j=1}^d y_{ij}.
\]
The condition that $L$ is contained in $X_{r,d}$ is equivalent to the condition 
\begin{equation}\label{eqn:infano}
\sum_i \by_{i}=0.
\end{equation}
The condition that $L$ is one-split is equivalent to the condition that some
$y_{ij}$ vanishes, and also to  the condition that some $\by_i$ vanishes.  The condition that $L$ is two-split is equivalent to the existence of $a \leq a_1 < a_2 \leq r$ such that $\by_{a_1} + \by_{a_2}=0$.
\begin{ex}[$k$-planes which are not one-split]\label{ex:sharp}
For $r=2m$ and $k=md-1$, let $L$ be any $k$-plane with $y_{ij}$ all linearly independent for $i\leq m$, $y_{(i+m)1}=-y_{i1}$, and $y_{(i+m)j}=y_{ij}$ for $j>1$. Then clearly $L$ is contained in $X_{r,d}$, but is not one-split (although it is two-split).

For $r=2m+1$ and $k=md$, consider forms $y_{ij}$ satisfying $\{y_{ij}\}_{i\leq m}$ and $y_{r1}$ all linearly independent, and
\begin{align*}
y_{(i+m)1}&=-y_{i1}\qquad&\textrm{for}\ i<m,\\
y_{(i+m)j}&=y_{ij}\qquad&\textrm{for}\ i<m,\ j>1,\\
y_{rj}&=y_{(2m)j}=y_{mj}&\textrm{for}\ j>1,\\
y_{(2m)1}&=-y_{m1}-y_{r1}.
\end{align*}
Let $L$ be the corresponding $md$-plane. Clearly $L$ is contained in $X_{r,d}$, but is not one-split. 

We thus see that the bound on $k$ in Conjecture \ref{conj:one-split} is sharp.
\end{ex}

We henceforth assume that $L\subset X_{r,d}$, that is, that $\sum_i \by_i=0$, and that none of the $\by_i$ vanish, that is, $L$ is not one-split. Without loss of generality, we may inductively reorder the forms $\by_i$ as follows: given $\by_1,\ldots,\by_s$, we take $\by_{s+1}$ to be any form such that the dimension of the vector space spanned by the $\{y_{ij}\}_{i\leq s+1}$ is maximal. 

After this re-ordering, we may define integers $\lambda_1,\lambda_2,\ldots,\lambda_r$ inductively by requiring that the dimension of the vector space  spanned by the $\{y_{ij}\}_{i\leq s}$ is equal to $\sum_{i \leq s} \lambda_i$.
By the way we have ordered the forms $\by_i$, this implies that $\lambda_1\geq \lambda_2\geq \ldots \geq \lambda_r$.
Indeed, by our choice of $\by_s$ we must have $\sum_{i \leq s} \lambda_i \geq \lambda_{s+1}+ \sum_{i \leq s-1} \lambda_i$ and, hence, $\lambda_s \geq \lambda_{s+1}$ for every $1 \leq s \leq r$.

We may then choose a new basis 
\[
z_{ij},\qquad 1\leq i \leq r\qquad 1\leq j \leq \lambda_i;
\]
for the degree one piece of $S$ with the property that each $z_{ij}$ is a factor of $\by_i$, and each factor of $\by_i$ is in the span of 
\[\{z_{hj}\}_{1 \leq h \leq i,1\leq j \leq   \lambda_h}.\]

We now will assume that 
\[
k\geq \begin{cases}
 \frac{r}{2}\cdot d-1  & r\ \textrm{even}\\
 \frac{r-1}{2}\ \cdot d + 1 & r\ \textrm{odd}\\
\end{cases}.
\]

 \begin{lemma}\label{lemma:help2}
  For $s\geq 0$, suppose that $\lambda_{r-s}=0$. If $r$ is odd, then $s \leq \frac{r-3}{2}$ . If $r$ is even, then $s \leq \frac{r}{2}-1$
  In particular, we always have $s+1 \leq r-s-1$.
 \end{lemma}
 \begin{proof}
   We have
   \begin{equation}
 k+1=\sum_{i=1}^r \lambda_i=\sum_{i=1}^{r-s-1}\lambda_i \leq (r-s-1)d. \label{eq:help}
 \end{equation}
For $r$ odd, our assumptions on $k$ imply
   $\frac{r-1}{2}d + 2\leq (r-s-1)d$. Hence, we have $s+\frac{2}{d} \leq \frac{r-1}{2}$ which implies $s < \frac{r-1}{2}$. Since $s$ is an integer we obtain $s \leq \frac{r-3}{2}$.

For $r$ even, \eqref{eq:help} implies  $\frac{r}{2}d \leq (r-s-1)d$, which directly implies the claim. 
 \end{proof}

\begin{lemma}\label{lemma:lambdas}
For $s\geq 0$, suppose that $\lambda_{r-s}=0$. 
Assume further that either
\begin{enumerate}
\item $d$ is even,
\item $r$ is even and $d\geq r-2s$, or
\item $r$ is odd and $r-2s\leq 6$.
\end{enumerate}
Then $\lambda_{s+1}+\lambda_{s+2}\geq d+2$. 
\end{lemma}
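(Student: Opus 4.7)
The plan is to argue by dimension counting in order to contradict the assumption $\lambda_{s+1}+\lambda_{s+2}\leq d+1$. Since the $y_{ij}$ span $S_1$, we have $\sum_{i=1}^r\lambda_i=k+1$, and $\lambda_{r-s}=0$ together with the monotonicity $\lambda_1\geq\cdots\geq\lambda_r$ forces $\lambda_{r-s}=\cdots=\lambda_r=0$, so the sum effectively runs from $i=1$ to $r-s-1$. Using $\lambda_i\leq d$ for $i\leq s$, the assumption $\lambda_{s+1}+\lambda_{s+2}\leq d+1$ (which, combined with monotonicity, also forces $\lambda_{s+2}\leq\lfloor(d+1)/2\rfloor$), and $\lambda_i\leq \lambda_{s+2}$ for $s+3\leq i\leq r-s-1$, one obtains the upper bound
\[
k+1\;\leq\;sd+(d+1)+(r-2s-3)\left\lfloor\tfrac{d+1}{2}\right\rfloor.
\]

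I would then compare this against the ambient lower bound on $k+1$ (equal to $\tfrac{r}{2}d$ when $r$ is even and $\tfrac{r-1}{2}d+2$ when $r$ is odd). A routine case analysis produces a strict contradiction under each branch of the hypothesis: if $d$ is even, the halving sharpens enough to force $d\leq 2$ (or $4\leq 2$ when $r$ is odd); if $r$ is even with $d\geq r-2s$ and $d$ odd, the arithmetic yields $d\leq r-2s-1$; and if $r$ is odd with $r-2s=3$, one lands at $2\leq 1$. The extreme sub-case $r$ odd with $r-2s=1$ is already vacuously ruled out by $\sum_{i=1}^s\lambda_i\leq sd<sd+2\leq k+1$, with no appeal to the assumption being needed.

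The main obstacle is the remaining sharp sub-case $r$ odd, $d$ odd, $r-2s=5$, where the inequality above becomes an equality, forcing the rigid structure $\lambda_i=d$ for $i\leq s$ and $\lambda_{s+1}=\cdots=\lambda_{s+4}=(d+1)/2$. To break this configuration I would exploit the greedy maximality of the ordering: swap arguments applied to $\{\by_{s+1},\ldots,\by_{s+4}\}$ show that each factor span $U_j$ of $\by_j$ decomposes as $U_j=(U_j\cap V)\oplus W_j$ with the $W_j$ pairwise disjoint modulo $V$, where $V=V_1\oplus\cdots\oplus V_s$ is the span of the earlier $\by_i$ and $W_j$ is the $(d+1)/2$-dimensional new subspace contributed by $\by_j$. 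Substituting $V=0$ in the identity $\sum_i\by_i=0$ yields
\[
\by_{s+1}|_{V=0}+\cdots+\by_{s+4}|_{V=0}+\by_r|_{V=0}=0,
\]
with each $\by_j|_{V=0}$ (for $j=s+1,\ldots,s+4$) supported purely in $W_j$. A monomial-support argument on $\by_r|_{V=0}$ shows that its mixed-$W_j$ monomials must cancel against nothing, so every linear factor of $\by_r|_{V=0}$ must lie within a single $W_j$; the remaining $\by_{j'}|_{V=0}$ must then vanish, forcing each corresponding $\by_{j'}$ to share a linear factor with $V$. A small further case analysis of this degenerate situation then yields the required contradiction.
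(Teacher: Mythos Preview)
Your dimension-count matches the paper's proof exactly: both arrive at
\[
\sum_{i=s+1}^{r-s-1}\lambda_i\;\leq\;(d+1)+(r-2s-3)\left\lfloor\tfrac{d+1}{2}\right\rfloor
\]
and compare against the lower bound coming from $k+1$. Your case analysis for $d$ even, for $r$ even with $d\geq r-2s$, for $r-2s=3$, and for $r-2s=1$ is correct and is precisely what the paper does.

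You are also right that the sub-case $r$ odd, $d$ odd, $r-2s=5$ is genuinely borderline: the two bounds meet at $2d+2$, so no strict contradiction falls out of the inequality. The paper's written proof simply asserts a contradiction here and does not supply the extra argument you attempt.

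However, your proposed resolution of this sharp case has a real gap. The step ``its mixed-$W_j$ monomials must cancel against nothing, so every linear factor of $\by_r|_{V=0}$ must lie within a single $W_j$'' is a non sequitur. A product of linear forms can have \emph{no} mixed monomials while every factor is mixed: already for $d=3$, with $a\in W_{s+1}$ and $b\in W_{s+2}$,
\[
(a+b)(a+\omega b)(a+\omega^{2}b)=a^{3}+b^{3}
\]
(where $\omega$ is a primitive cube root of unity) has only pure monomials yet all three factors are mixed. So the inference from ``no mixed monomials'' to ``each factor lies in one $W_j$'' fails. In fact, if all four $\by_j|_{V=0}$ are nonzero then restricting to a single $W_j$ shows that \emph{every} $l_i^{(j)}$ is nonzero, so every factor of $\by_r|_{V=0}$ is mixed across all four blocks---the opposite of what you claim. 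The vanishing of the many mixed multigraded pieces of $\prod_i l_i$ is a strong constraint, but it does not obviously force any $\by_{j'}|_{V=0}$ to vanish, and your sketch does not extract this. The final ``small further case analysis'' for $s>0$ is likewise not an argument as written.

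In short: the counting part is fine and matches the paper; the extra structural argument you add for the equality case does not go through as stated, and would need a different idea (for instance, an argument in the spirit of Proposition~\ref{prop:rkone}, which the paper invokes elsewhere to handle analogous borderline situations).
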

\begin{proof}
We have that 
\[
k+1=\sum_{i=1}^r \lambda_i=\sum_{i=1}^{r-s-1} \lambda_i \leq sd+\sum_{i={s+1}}^{r-s-1} \lambda_i.
\]
Note that by Lemma~\ref{lemma:help2} we have  $s+1 \leq r-s-1$, so the summation on the right hand side makes sense.
Using our assumption on $k$  we thus have 
\begin{equation}\label{eq:ineq}
\sum_{i={s+1}}^{r-s-1} \lambda_i
\geq
\begin{cases}
 \frac{r-2s}{2}\cdot d  & r\ \textrm{even}\\
 \frac{r-2s-1}{2}\ \cdot d + 2 & r\ \textrm{odd}\\
\end{cases}.
\end{equation}
Suppose that $\lambda_{s+1}+\lambda_{s+2}\leq d+1$.
If $d$ is even, then $\lambda_{s+2}\leq \frac{d}{2}$, and thus $\lambda_{i}\leq \frac{d}{2}$ for all $i\geq s+2$. But then
\[
\sum_{i={s+1}}^{r-s-1} \lambda_i\leq
(d+1)+(r-2s-3)\frac{d}{2}=\frac{r-2s-1}{2}\cdot d+1
\]
contradicting \eqref{eq:ineq}, since $d\geq 3$.

Assume instead that $d$ is odd. Then  $\lambda_{i}\leq \frac{d+1}{2}$ for all $i\geq s+2$, so 
\[
\sum_{i={s+1}}^{r-s-1} \lambda_i\leq
(r-2s-1)\frac{d+1}{2}.
\]
But this contradicts \eqref{eq:ineq} if $r$ is even and $d\geq r-2s$, or if $r$ is odd and $r-2s\leq 6$.
\end{proof}

\begin{proof}[Proof of Theorem \ref{thm:reduction}]
First note that $\lambda_r=0$. Indeed, if not, then $\by_r$ contains a factor which is not in the span of the factors of the $\by_i$ for $i<r$, so it is impossible to satisfy Equation \eqref{eqn:infano}.
Suppose that we have inductively shown that $\lambda_{r-s}=0$ for some $s\leq \frac{r-1}{2}-1$. Then by Lemma \ref{lemma:lambdas}, we have that $\lambda_{s+1}+\lambda_{s+2}\geq d+2$. If $\lambda_{r-s-1}\neq 0$, we set $z_{i1}=0$ for $i=s+3,\ldots,r-s-1$ and use property $C_{s+1}^d$ applied to 
\[
-\sum_{i=r-s}^r\by_i =\sum_{i=1}^{s+2} \by_i \qquad\mod \{z_{i1}\}_{s+3\leq i\leq r-s-1}
\]
to conclude that some $\by_i$ for $i\leq s+2$ vanishes modulo  $\{z_{i1}\}_{s+3\leq i\leq r-s-1}$. But by our construction of the $\by_i$, this is impossible, and we conclude that $\lambda_{r-s-1}=0$.

We proceed in this fashion until we obtain $\lambda_{t}=0$ for
\[
t=\left\lceil\frac{r}{2}\right\rceil+1,
\]
since for 
\[
s=\left\lfloor \frac{r-1}{2}-1\right\rfloor,
\]
we have
\[
t\geq r-s-1.
\]

If $r$ is odd, we conclude again by Lemma \ref{lemma:lambdas} that $\lambda_{t-2}+\lambda_{t-1}\geq d+2$, and an appropriate application of property $C_{(r-1)/2}^d$ shows that some $\by_i$ must vanish, a contradiction.
If $r$ is even, we must have $\lambda_1=\ldots=\lambda_{r/2}=d$. This is impossible if $k$ satisfies the bound of Conjecture \ref{conj:one-split}, completing the claim regarding one-splitting. For the claim regarding two-splitting, we may apply Lemma \ref{lemma:uniqueness} to conclude that  $\by_1=-\by_j$ for some $j>r/2$. But this implies two-splitting.
\end{proof}

\begin{proof}[Proof of Theorem \ref{thm:conj}]
The first part of the Theorem is simply Propositions \ref{prop:rkonebasis}, \ref{prop:rktwobasis}, and \ref{prop:degfour}. The statement regarding Conjectures \ref{conj:one-split} and \ref{conj:two-split} following immediately from Theorem \ref{thm:reduction} except in the cases ($r=4$, $d=3$), ($r=6$, $d=3$). and ($r=6$, $d=5$). 
The obstruction in all these cases comes about that in the proof of Theorem \ref{thm:reduction}, we cannot use Lemma \ref{lemma:lambdas} to conclude that $\lambda_1+\lambda_2\geq d+2$. However, we may use Proposition \ref{prop:rkone} to compensate. 

Consider for example the case $r=6$, $d=3$. 
If $\lambda_1+\lambda_2\leq d+1=4$, then we must in fact have $\lambda_1=\lambda_2=\ldots=\lambda_4=2$, and $\lambda_5=1$. Setting $z_{51}=0$, we may apply Proposition \ref{prop:rkone} to reach a contradiction. Thus, $\lambda_5=\lambda_6=0$. A similar argument shows that $\lambda_3=0$ as well, and we conclude as in the proof of Theorem \ref{thm:reduction}.
The other two cases are similar, and left to the reader.
\end{proof}

\begin{rem}\label{rem:3d}
Consider the Fano scheme $\bF_k(X_{r,d})$. If we only know that $C^d_m$ is true for all $m\leq M$ for some $M$ strictly less than $(r-1)/2$, we may still use the above arguments to conclude that $\bF_k(X_{r,d})$ is one-split if $k$ is sufficiently large.

For example, we know that $C^d_m$ is always true for $m=1,2$. For $r\leq 6$, we already know by Theorem \ref{thm:conj} exactly when $\bF_k(X_{r,d})$ is one-split, so assume that $r\geq 7$. We claim that if $k\geq d(r-3)$, then $\bF_k(X_{r,d})$ must be one-split.
Indeed, if $d$ is even, Lemma \ref{lemma:lambdas} applies and arguing as in the proof of Theorem \ref{thm:conj} shows that if $L$ is not one-split, then $\lambda_r=\lambda_{r-1}=\lambda_{r-2}=0$. But this contradicts $k\geq d(r-3)$. For $d$ odd, slightly more care is needed. Assume some $k$-plane $L$ is not one-split.
The arguments from Lemma \ref{lemma:lambdas} will apply if \[k\geq \frac{(r-1)(d+1)}{2},\] in which case we are done as above. But this inequality is satisfied except for the case $r=7$, $d=3$. As always, $\lambda_7=0$.
 But certainly $\lambda_2+\lambda_3\geq d+1=4$, so using Proposition \ref{prop:rkone} in place of $C^3_1$ we conclude that $\lambda_{6}=0$. But then one easily verifies that $\lambda_2+\lambda_3\geq d+2=5$, so $\lambda_5=0$, which is impossible.
\end{rem}

\subsection{Consequences and examples}
We now want to use our results on splitting to study the geometry of $\bF_k(X_{r,d})$. We first note the following result:

\begin{thm}\label{thm:connected}
The Fano scheme $\bF_k(X_{r,d})$ is non-empty if and only if $k< r(d-1)$. Such a Fano scheme is connected if and only if $k< r(d-1)-1$.
\end{thm}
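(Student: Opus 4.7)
The proof will proceed in two parts corresponding to the two assertions, both handled by induction on $r$ and leveraging the one-splitting results of Section~\ref{sec:mainproof}.

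\textbf{Non-emptiness.} The forward direction is witnessed by the explicit coordinate subspace $V(x_{1,1},\ldots,x_{r,1})\subset X_{r,d}$, which has dimension $r(d-1)-1$ and contains $k$-planes for all $k\leq r(d-1)-1$. For the converse, I will show by induction on $r$ that $\bF_k(X_{r,d})=\emptyset$ whenever $k\geq r(d-1)$. The base case $r=1$ is immediate since $X_{1,d}$ is a union of $d$ hyperplanes and any linear subspace therein has dimension at most $d-2$. For the inductive step, given $L\subset X_{r,d}$ with $\dim L=k\geq r(d-1)$, the key is to show that $L$ is one-split; once $L\subset V(x_{a,j})$ for some $(a,j)$, it lies in the cone $X_{r,d}\cap V(x_{a,j})$ over $X_{r-1,d}$ with apex a $\PP^{d-2}$, and projecting from the apex produces a linear subspace $L'\subset X_{r-1,d}$ with $\dim L'\geq \dim L-(d-1)\geq (r-1)(d-1)$, contradicting the inductive hypothesis.

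To verify one-splitting in this range, I follow the framework of the proof of Theorem~\ref{thm:reduction}. Assuming $L$ is not one-split, the $\lambda_i$ satisfy $\sum\lambda_i=k+1\geq r(d-1)+1$ and $\lambda_r=0$ by the argument preceding Lemma~\ref{lemma:lambdas}. Iterated application of $C^d_1$ and $C^d_2$ (Propositions~\ref{prop:rkonebasis} and~\ref{prop:rktwobasis}) in the style of Lemma~\ref{lemma:lambdas} then forces $\lambda_{r-1}=\lambda_{r-2}=0$; the inequalities $\lambda_{s+1}+\lambda_{s+2}\geq d+2$ for $s=0,1$ follow from $k\geq r(d-1)$ and $d\geq 3$ via a direct pigeonhole count, stronger than what is needed for Lemma~\ref{lemma:lambdas} itself. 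A dimension count then yields a contradiction when $r\leq 3d$; the range $r>3d$ is handled by iterating further, using the full force of $C^d_m$ for $d\leq 4$ (Proposition~\ref{prop:degfour}) and the stronger Proposition~\ref{prop:rkone} otherwise.

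\textbf{Connectedness.} For $k=r(d-1)-1$, the iterated one-splitting shows every $k$-plane has the form $V(x_{1,j_1},\ldots,x_{r,j_r})$ for some $(j_1,\ldots,j_r)\in[d]^r$, giving $d^r\geq 2$ isolated reduced points and therefore disconnectedness. For $k<r(d-1)-1$, the $d^r$ Grassmannians $G_{(j_1,\ldots,j_r)}\cong\Gr(k+1,r(d-1))$ parametrizing $k$-planes in the coordinate subspaces $V(x_{1,j_1},\ldots,x_{r,j_r})$ are irreducible subvarieties of $\bF_k(X_{r,d})$, and two Grassmannians corresponding to tuples differing in a single index share a non-empty family of $k$-planes in a common $\PP^{r(d-1)-2}$; by connectivity of the Hamming graph on $[d]^r$, the union $\bigcup_{(j_1,\ldots,j_r)}G_{(j_1,\ldots,j_r)}$ is connected. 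It remains to argue that every irreducible component of $\bF_k(X_{r,d})$ meets this union: for $k$ in the one-splitting range this follows from the iterated reduction above, while for smaller $k$ a degeneration argument producing flat families specializing to coordinate $k$-planes is required.

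The main obstacle is the one-splitting step at the sharp boundary $k=r(d-1)$ in the range $r>3d$ with $d\geq 5$, together with the degeneration step for small $k$ in the connectedness part; both are expected to be overcome by exploiting the specific structure of $X_{r,d}$ together with the sharpness of the dimension bounds.
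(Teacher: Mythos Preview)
Your proposal has genuine gaps, and they are precisely the two places you flag as obstacles at the end. Both are avoided in the paper by a single idea you do not use: the torus action.

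The paper observes that the subtorus
\[
T=\{x_{11}\cdots x_{1d}=x_{21}\cdots x_{2d}=\ldots=x_{r1}\cdots x_{rd}\}\subset(\KK^*)^{rd}
\]
acts on $X_{r,d}$ and hence on $\bF_k(X_{r,d})$. The only $T$-fixed $k$-planes in $\PP^{rd-1}$ are coordinate subspaces, and such a subspace lies in $X_{r,d}$ if and only if for every $i$ some $y_{ij}$ vanishes. Since $\bF_k(X_{r,d})$ is projective, every non-empty component contains a $T$-fixed point (Borel fixed point theorem). This immediately forces $k\leq r(d-1)-1$ whenever the Fano scheme is non-empty, with no appeal to one-splitting or to $C^d_m$ at all. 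Your inductive reduction via one-splitting is therefore unnecessary for emptiness, and the range $r>3d$, $d\geq 5$ that you cannot handle simply does not arise.

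The same idea closes the gap in your connectedness argument. You correctly identify the union $\bigcup G_{(j_1,\ldots,j_r)}$ of coordinate Grassmannians and show it is connected for $k<r(d-1)-1$, but you do not prove that every irreducible component meets this union; you only gesture at ``a degeneration argument producing flat families specializing to coordinate $k$-planes.'' The torus action supplies exactly this: every component contains a $T$-fixed point, and every $T$-fixed point is a coordinate $k$-plane, hence lies in some $G_{(j_1,\ldots,j_r)}$. The paper then deforms any such fixed $k$-plane inside the Fano scheme to one with $y_{11}=\cdots=y_{r1}=0$, landing in a single common Grassmannian.

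For disconnectedness at $k=r(d-1)-1$, both you and the paper invoke iterated one-splitting (the paper via Remark~\ref{rem:3d}) to conclude that every $k$-plane is a coordinate subspace; your sketch here is along the right lines. But your attempt to run the same machinery one dimension higher, at $k=r(d-1)$, to prove emptiness is both unnecessary and, as you note, incomplete.
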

\begin{proof}
Consider the subtorus $T$ of $(\KK^*)^{rd}$ cut out by 
\[
x_{11}x_{12}\cdots x_{1d}=x_{21}x_{22}\cdots x_{2d}=\ldots=x_{r1}x_{r1}\cdots x_{rd}.
\]
This torus acts naturally on $\PP^{rd-1}$. Since it fixes $X_{r,d}$, this action induces an action on $X_{r,d}$, and hence also on $\bF_k(X_{r,d})$. It is straightforward to check that the only $k$-planes of $\PP^{rd-1}$ fixed by $T$ are intersections of coordinate hyperplanes. Thus, any torus fixed point of $\bF_k(X_{r,d})$ corresponds to a $k$-plane $L$ whose associated non-zero forms $y_{ij}$  of \S \ref{sec:mainproof} are all linearly independent. 

Recall that such a $k$-plane $L$ is contained in $\bF_k(X_{r,d})$ if and only if Equation \eqref{eqn:infano} is satisfied. But since by assumption the non-zero $y_{ij}$ are linearly independent, this is equivalent to requiring that for each $i$, there is some $j$ such that $y_{ij}=0$. Since every component of $\bF_k(X_{r,d})$ must contain a torus fixed point, it follows immediately that if $k\geq r(d-1)$, $\bF_k(X_{r,d})$ must be empty. The non-emptiness of $\bF_k(X_{r,d})$ for $k< r(d-1)$ is also clear.

Assume now that $k=r(d-1)-1$. By Remark \ref{rem:3d}, it inductively follows that any $k$-plane of $X_{r,d}$ must be torus fixed. But there are $r^d$ such fixed $k$-planes, so $\bF_k(X_{r,d})$ is not connected.

Suppose finally that $k<r(d-1)-1$, and let $L$ be a torus fixed $k$-plane contained in $\bF_k(X_{r,d})$. We prove that $\bF_k(X_{r,d})$ is connected by deforming $L$ to a $k$-plane satisfying \[y_{11}=y_{21}=\ldots=y_{r1}=0.\] Since the set of all such $k$-planes forms a connected subscheme of $\bF_k(X_{r,d})$ isomorphic to the Grassmannian $G(k+1,r(d-1))$, and every irreducible component of the Fano scheme contains a torus fixed point, it follows that $\bF_k(X_{r,d})$ is connected. 

To see that we can deform $L$ to a $k$-plane of the desired type, let $j_1,\ldots,j_r$ be such that $y_{1j_1},\ldots,y_{rj_r}$ all vanish; these must exist since $L$ is torus fixed and contained in $X_{r,d}$. Let $i$ be the smallest index for which $j_i\neq 1$. The set of all $k$-planes satisfying 
$y_{1j_1}=\ldots=y_{rj_r}=0$ forms a closed subscheme of $\bF_k(X_{r,d})$ isomorphic to $G(k+1,r(d-1))$. Since $k<r(d-1)-1$, this set contains a $k$-plane $L'$ satisfying  $y_{i1}=0$ along with $y_{1j_1}=\ldots=y_{rj_r}=0$, and $L$ deforms to $L'$. Replacing $L$ with $L'$ we can continue this procedure until we arrive at a $k$-plane satisfying $y_{11}=y_{21}=\ldots=y_{r1}=0$ as desired.
\end{proof}

\begin{rem}
	Theorem \ref{thm:connected} is in stark contrast to the situation for the Fano scheme $\bF_k(X)$ for a general degree $d>2$ hypersurface $X\subset \PP^n$. Such a Fano scheme $\bF_k(X)$ is non-empty if and only if
	$\phi(n,k,d)\geq 0$, and connected if $\phi(n,k,d)\geq 1$, where
\[
	\phi(n,k,d)=(k+1)(n-k)-{{k+d}\choose{k}},
\]
see \cite{langer:97a}.
\end{rem}

We now illustrate on several examples how our results help determine the irreducible component structure of $\bF_k(X_{r,d})$.
\begin{ex}[$\bF_k(X_{r,d})$ for $k\geq (r-2)(d-1)+2$]\label{ex:one}
For $k\geq (r-2)(d-1)+2$ and $r\geq 3$, Conjecture \ref{conj:one-split} would imply that $\bF_k(X_{r,d})$ is one-split. Assume this to be true. Considering any $k$-plane $L$ contained in $X_{r,d}$, we know that some $x_{ij}$ must vanish. Intersecting $L$ with $x_{i1}=x_{i2}=\ldots=x_{id}=0$, we obtain a linear subspace $L'$ in $X_{r-1,d}$ of dimension $k'$, where $k'\geq k-(d-1)\geq (r-3)(d-1)+2$. Hence, $L'$ is also (conjecturally) one-split, as long as $r-1\geq 3$. We may proceed in this fashion until we obtain a linear subspace $L''$ in $X_{2,d}$ of dimension $k''$, where $k''\geq k-(r-2)(d-1)\geq 2$. If $L''$ is also one-split, then $L$ is contained in an $(r(d-1)-1)$-plane of the form 
\[
x_{1j_1}=x_{2j_2}=\ldots=x_{rj_r}=0
\]
for some choice of $j_1,\ldots,j_r$. The $k$-planes in this fixed $(r(d-1)-1)$-plane are parametrized by the Grassmannian $G(k+1,r(d-1))$. This leads to $d^r$ irreducible components of $\bF_k(X_{r,d})$, each isomorphic in its reduced structure to $G(k+1,r(d-1))$.

If on the other hand $L''$ is not one-split, then Equation \eqref{eqn:infano} implies that after some permutation in the $j$ indices, $y_{1j}$ and $y_{2j}$ are linearly dependent for all $j$. In particular, $L''$ is contained in a $(d-1)$-plane of $X_{2,d}$, appearing in a $d-1$-dimensional family. Thus, the plane $L$ is contained in an $(r-1)(d-1)$-plane of $X_{r,d}$, which is moving in a $(d-1)$-dimensional family.
This only can occur if $k\leq (r-1)(d-1)$. In such cases, it follows that the 
corresponding irreducible component of $\bF_k(X_{r,d})$ has dimension $(d-1)+(k+1)((r-1)(d-1)-k)$, and there are 
\[
{r\choose 2} d^{r-2}\cdot (d!)
\]
such components.

To summarize, the Fano scheme has two types of irreducible components:
\begin{itemize}
\item {\bf Type A}: $d^r$ components of dimension $(k+1)(r(d-1)-(k+1))$, isomorphic in their reduced structures to a Grassmannian; general $k$-planes in such components are  contained in the intersection of $r$ coordinate hyperplanes.
\item  {\bf Type B}:  Assuming $k\leq (r-1)(d-1)$, ${r\choose 2} d^{r-2}\cdot (d!)$ components of dimension $(d-1)+(k+1)((r-1)(d-1)-k)$; general $k$-planes in such components are contained in the intersection of $(r-2)$ coordinate hyperplanes. 
\end{itemize}
This analysis relied on Conjecture \ref{conj:one-split}. By Theorem \ref{thm:conj}, this holds true if $r \leq 6$ or $d=4$, so we know our above conclusions are true as long as this is satisfied. Furthermore, by Remark \ref{rem:3d}, the one-splitting we need follows if $k\geq d(r-3)$. But this is always satisfied as long as $r\leq d+2$.
\end{ex}

The above example is somewhat elementary, since all the Fano schemes appearing in the reduction steps are one-split or two-split. However,  if we understand the structure of a Fano scheme which isn't one-split (or even two-split), we can leverage this to an understanding of $\bF_k(X_{r,d})$ for larger values of $r$. We will illustrate in the next two examples.

\begin{ex}[Special components of $\bF_d (X_{3,d})$]\label{ex:base}
By Example \ref{ex:sharp}, we know that $\bF_d(X_{3,d})$ is not one-split; since $r=3$, it is also not two-split. Nonetheless, with a bit of work, we can completely describe these Fano schemes. In this example, we will describe a special type of irreducible component; all components will be dealt with in Example \ref{ex:three}.

 We begin with the case $d=2$ (although we usually have been assuming $d>2$).
The variety $X_{3,2}$ is just a non-singular quadric fourfold; it is well-known that $\bF_2(X_{3,4})$ is the disjoint union of two copies of $\PP^3$.

We now suppose that $d>2$. Let $L$ be a $d$-plane of $X_{3,d}$ which is not one-split. After reordering indices, we may assume that $y_{11},\ldots,y_{1\lambda_1},y_{21},\ldots,y_{2\lambda_2}$ are linearly independent, with $\lambda_1+\lambda_2\geq d+1$ and $\lambda_1\geq \lambda_2$. If $\lambda_2=1$, then we may replace $\lambda_1$ with $\lambda_1-1$ and $\lambda_2$ with $\lambda_2+1$, unless all $y_{2j}$ are linearly dependent. But this is easily seen to contradict Equation \eqref{eqn:infano}. So we may assume that $\lambda_2\geq 2$. 

For each choice $y_{1j_1},y_{2j_2}$ with $j_i\leq \lambda_i$, by setting
$y_{1j_1}=y_{2j_2}=0$, we find that one $y_{3j}$ depends only on  $y_{1j_1},y_{2j_2}$. A simple counting argument shows that some $y_{3j}$ can only depend on some $y_{1j_1}$ or $y_{2j_2}$. But by Equation \eqref{eqn:infano}, this form must also divide some $y_{2j_2'}$ or respectively $y_{1j_1'}$ (for $j_i'>\lambda_i$). Factoring this out of Equation \eqref{eqn:infano}, we arrive at the situation of a $k'$-plane in $X_{3,d-1}$, with $k'\geq d-1$. If $k'>d-1$, then this plane is one-split, contradicting our assumption, so in fact $k'=d-1$. We continue in this fashion of reducing degree until we arrive at one of the two toric components of $\bF_2(X_{3,2})$. The component of $\bF_d(X_{3,d})$ is a $(d-2)$-fold iterated $\PP^2$ bundle over the toric component, and hence has dimension $3+2(d-2)=2d-1$. There are 
\[
2\cdot {d \choose{2}}^3 \left({(d -2)}!\right)^2
\]
such components:
we choose one of two toric components of $\bF_2(X_{3,2})$; then for each index $i$ we choose two of the $y_{ij}$ which are not getting factored out. We then match each of the remaining $y_{1j}$ with a $y_{2j'}$ and $y_{3j''}$, of which there are 
$\left((d -2)!\right)^2$ ways.

\end{ex}

We now leverage the above example to lower the bound on $k$ in Example \ref{ex:one} by one:

\begin{ex}[$\bF_{k}(X_{r,d})$ for $k=(r-2)(d-1)+1$]\label{ex:three}
Let $L$ be any $k$-plane of $X_{r,d}$, for $k=(r-2)(d-1)+1$.
Similar to in Example \ref{ex:one}, Conjecture \ref{conj:one-split} would imply that $L$ is one-split, as long as $r\geq 5$. As in Example \ref{ex:one}, we successively reduce to a $k'$-plane in $X_{4,d}$, with $k'\geq 2(d-1)+1=2d-1$. By Theorem \ref{thm:conj}, $L'$ is two-split.

Suppose first that $L'$ is not one-split. Then after permuting $\{1,2,3,4\}$, we may assume that $\by_1+\by_2=\by_3+\by_4=0$. 
The factors of $\by_1$ and $\by_2$ must agree up to scaling, and similarly for $\by_3$ and $\by_4$. Similar to the component of type $B$ in Example \ref{ex:one}, we see that $L'$ is a $2d-1$-plane of $X_{4,d}$, moving in a $2(d-1)$-dimensional family. Thus, the plane $L$ also is moving in a $2(d-1)$-dimensional family. It follows that the corresponding irreducible component of $\bF_k(X_{r,d})$ has dimension $2(d-1)$,
 and there are 
\[
{r\choose r-4,2,2} d^{r-4}\cdot (d!)^2
\]
such components.

If $L'$ is one-split, we may reduce further to a $k''$-plane $L''$ in $X_{3,d}$ with $k''\geq d$. Suppose next that $L''$ is not one-split. Then $k''=d$, and  $L''$ corresponds to a point in one of the $(2d-1)$-dimensional irreducible components described in Example \ref{ex:base}. 
 It follows that the corresponding irreducible component of $\bF_k(X_{r,d})$ has dimension $2d-1$,
 and there are 
\[
{r \choose 3}d^{r-3}\cdot 2\cdot {d \choose{2}}^3 \left({(d -{2})}!\right)^2
\]
such components.

Finally, if $L''$ is also one-split, then we get components of types A and B similar to those appearing in Example \ref{ex:one}. To summarize, assuming that the necessary splitting conjectures are true, $\bF_{k}(X_{r,d})$ for  $k=(r-2)(d-1)+1$ has the following irreducible components:
\begin{itemize}
\item {\bf Type A}: $d^r$ components of dimension \[2((r-2)(d-1)+2)(d-2),\] isomorphic in their reduced structures to a Grassmannian; general $k$-planes in such components are  contained in the intersection of $r$ coordinate hyperplanes.
\item  {\bf Type B}:  ${r\choose 2} d^{r-2}\cdot (d!)$ components of dimension \[(d-1)+( (r-2)(d-1)+2)(d-2);\] general $k$-planes in such components are contained in the intersection of $(r-2)$ coordinate hyperplanes. 
\item  {\bf Type C}:  there are \[{r \choose 3}d^{r-3}\cdot 2\cdot {d \choose{2}}^3 \left({(d -{2})}!\right)^2\] components of dimension $2d-1$; general $k$-planes in such components are contained in the intersection of $(r-3)$ coordinate hyperplanes. 
\item  {\bf Type D}:  there are \[{r\choose r-4,2,2} d^{r-4}\cdot (d!)^2\] components of dimension $2(d-1)$; general $k$-planes in such components are contained in the intersection of $(r-4)$ coordinate hyperplanes. 
\end{itemize}
This analysis relied on appropriate splitting statements, which (similar to Example \ref{ex:one}) hold true if $r \leq 6$, $d=4$, or $r\leq d+1$.
\end{ex}

\begin{ex}[$\bF_5(X_{4,3})$]
For a concrete example, consider $\bF_5(X_{4,3})$. By Example \ref{ex:three}, we see that this Fano scheme has the following components:

\begin{center}
\begin{tabular}{l r r}
& Dimension & Number\\
\hline
{\bf Type A} & 12 & 81\\
{\bf Type B} & 8 & 324\\
{\bf Type C} & 5 & 648\\
{\bf Type D} & 4 & 216\\
\end{tabular}.
\end{center}
\end{ex}

\section{Product Rank}\label{sec:pr}
\subsection{Bounding product rank}
\begin{proof}[Proof of Theorem \ref{thm:bound}]
Assume that $\pr(f)\leq r$. Since $f$ is concise, this implies that there is an $n$-dimensional linear space $Y\subset \PP^{rd-1}$ such that $V(f)=X_{r,d}\cap Y$. 
Since we are assuming that $V(f)$ is covered by $k$-planes, there must be a positive-dimensional irreducible subvariety $S\subset \bF_k(X_{r,d})$ such that the $k$-planes corresponding to points in $S$ are all contained in $Y$, and that the linear span of these $k$-planes is exactly $Y$.

Now, if all $k$-planes parametrized by $S$ are contained in a coordinate hyperplane of $\PP^{rd-1}$, we can clearly write $f$ as a sum of $r-1$ products of linear forms, that is, $\pr(f)\neq r$. But this is certainly the case if $\bF_k(X_{r,d})$ is one-split.

Assume instead that $r$ is even and the two-splitting assumption of the theorem is fulfilled. As above, if every $k$-plane parametrized $S$ is contained in a coordinated hyperplane, we are done. Otherwise, by the two-splitting assumption, we can permute the indices $i=1,\ldots,r$ such that 
every $k$-plane $L$ parametrized by $S$ is contained in 
\[
V(x_{i1}\cdots x_{id}+x_{(i-1)1}\cdots x_{(i-1)d})
\] 
for $i=2,4,\ldots,r$. Using the notation from \S \ref{sec:fano}, this tells us that 
\begin{equation}\label{eqn:ys}
y_{i1}\cdots y_{id}+y_{(i-1)1}\cdots y_{(i-1)d}=0.
\end{equation}
for $i=2,4,\ldots,r$.
After reordering the $y_{ij}$ for each fixed $i$, we conclude (by unique factorization of polynomials) that the forms $y_{ij}$ and $y_{(i-1)j}$ are proportional for $i=2,4,\ldots,r$ and $j\leq d$ for all $k$-planes $L$ in $S$.

For some fixed $s=2,4,6,\ldots,r$, suppose that the ratio  $y_{sj}/y_{(s-1)j}$ is some constant $c_j$ as $L$ ranges over $S$. Note that these constants satisfy $\prod c_j=-1$.
 Then every $L$ in $S$ is contained in the linear space
\[
V\left(\{x_{sj}-c_jx_{(s-1)j}\}_{j\leq d}\right)
\]
so their span $Y$ is as well. This means that after restricting to $Y$ we have
\[
\sum_{i=1}^r x_{i1}\cdots x_{id}=\sum_{i\neq s,s-1} x_{i1}\cdots x_{id}
\]
so the product rank of $f$ is at most $r-2$.

We have thus arrived in the situation where for each fixed $i=2,4,\ldots,r$, there is some $j\leq d$ such that the ratio between $y_{ij}$ and $y_{(i-1)j}$ is non-constant over $S$. A straightforward calculation shows that the dimension of the span of two general $k$-planes $L,L'$ in $S$ must be at least $k+r$, leading to the inequality $k+r\leq n$; by assumption, this is a contradiction.
\end{proof}

\begin{rem}\label{rem:better}
Suppose that in the situation of part two of Theorem \ref{thm:bound}, we know that the family of $k$-planes $S\subset \bF_k(V(f))$ covering $V(f)$ is $m$-dimensional. Then the hypothesis $k>n-r$ may be replaced with the condition $k>n-m-r/2$. Indeed, in the conclusion of the proof of the theorem, the assumption on the dimension of $S$ guarantees that at least $m$ of the ratios $y_{ij}/y_{(i-1)j}$ vary independently of each other. Combining Equation \eqref{eqn:ys} with the fact that at least one ratio $y_{ij}/y_{(i-1)j}$  varies for each $i$ guarantees that in fact a total of at least $m+r/2$ ratios vary. As above, this shows that the dimension of the span of two general $k$-planes $L,L'$ in $S$ must be at least $k+m+r/2$, leading to the desired contradiction.
\end{rem}

\subsection{Examples of bounds on product rank}

\begin{ex}[$3\times 3$ determinant]\label{ex:det3}
In \cite{ilten:16a}, Z.~Teitler and the first author prove that $\pr(\det_3)>4$ over $\CC$, where $\det_3$ is the determinant of a generic $3\times 3$ matrix. H.~Derksen gave an expression for $\det_3$ as a sum of $5$ multihomogeneous products of linear forms in \cite{derksen:16a}, so we conclude $\pr(\det_3)=5$.
This also shows that the tensor rank of $\det_3$ equals five.

The proof that $\pr(\det_3)$ consisted of a computer calculation showing that $\bF_5(X_{4,3})$ is $2$-split, and then a special case of Theorem \ref{thm:bound}. Our Theorem \ref{thm:conj} makes this computer calculation unnecessary, and is valid in arbitary characteristic. We conclude that the product and tensor ranks of $\det_3$ are at least five over \emph{any} field.

Derksen's identity requires $2$ to be invertible, so we conclude that except in characteristic $2$, $\pr(\det_3)=5$. We do not know whether $\pr(\det_3)$ is $5$ or $6$ in characteristic $2$.
\end{ex}

\begin{ex}[$4 \times 4$ determinant]\label{ex:det4}
Let $\det_4$ be the $4\times 4$ determinant; this is easily seen to be a concise form. The projective hypersurface $V(\det_4)$ is covered by $11$-dimensional linear spaces, see e.g.~\cite{ilten:15a}.
By Theorem \ref{thm:conj}, we know that $\bF_{11}(X_{6,4})$ and $\bF_{7}(X_{4,4})$ are both $2$-split, so we may apply Theorem \ref{thm:bound} to conclude that $\pr(\det_4)\neq 6$. A similar application of Theorem \ref{thm:bound} shows that $\pr(\det_4)\neq 4,5$. If $\pr(\det_4)\leq 3$, then the projective hypersurface $V(\det_4)\subset \PP^{15}$ must be a cone, in which case every maximal linear subspace would contain a common line. But this is not the case, so we conclude that $\pr(\det_4)\geq 7$ (in arbitrary characteristic).

This is exactly the bound on product rank in characteristic zero which follows from Z.~Teitler and H.~Derksen's bound on Waring rank. They show that the Waring rank of $\det_4$ is at least $50$ \cite{derksen:15a}, from which follows that $\pr(\det_4)\geq 7$ by \cite[\S 1.2]{ilten:16a}.

Our above argument for the product rank of $\det_4$ can be generalized to show that, for $n\geq 3$, $\pr(\det_n)\geq 2n-1$, as long as we assume that Conjecture \ref{conj:two-split} holds. However, for $n\geq 5$ this is much worse than the bound that follows from known lower bounds on Waring rank \cite{derksen:15a}.   
\end{ex}

\begin{ex}[$6 \times 6$ Pfaffian]\label{ex:pfaffian}
Let $f$ be the Pfaffian of a generic $6\times 6$ skew-symmetric matrix; this is also a concise form. Derksen and Teitler show that the Waring rank of $f$ is at least $24$ \cite{derksen:15a}. Section 1.2 of \cite{ilten:16a} then implies that $\pr(f)\geq 6$. 

We will use Theorem \ref{thm:bound} to show that $\pr(f)\neq 6$, and hence $\pr(f)\geq 7$, a new lower bound. First note that by Theorem \ref{thm:conj}, $\bF_9(X_{6,3})$ is one-split. Secondly, we have that $V(f)\subset \PP^{14}$ is covered by projective $9$-planes. Indeed, for any $6\times 6$ singular skew-symmetric matrix $A$ with $0\neq v\in\KK^6$ in its kernel, consider the linear space of all $6\times 6$ skew-symmetric matrices $B$ satisfying
\[
B\cdot v=0.
\]
This is clearly a linear space of singular skew-symmetric matrices containing $A$. There are six linear conditions cutting out this linear space, but they are linearly dependent, since
\[
v^\mathrm{tr}\cdot A\cdot v=0.
\]
Hence, $A$ is contained in a linear space of dimension $14-5=9$.
The claim $\pr(f)\neq 6$ now follows from Theorem \ref{thm:bound}.
\end{ex}

For our final example, we must use a different argument than Theorem \ref{thm:bound}, since the permanental hypersurface is not covered by high-dimensional linear spaces:
\begin{ex}[$4\times 4$ permanent]\label{ex:perm}
	Let $\perm_4$ be the permanent of a generic $4\times 4$ matrix. Assume that the characteristic of $\KK$ is not two, in which case Example \ref{ex:det4} applies.

Shafiei has shown that the Waring rank of $\perm_4$ is at least $35$ \cite{shafiei:15a}, from which follows that  $\pr(\perm_4)\geq 5$ by \cite[\S 1.2]{ilten:16a}. We will show that in fact, $\pr(\perm_4)\geq 6$. On the other hand, Glynn's formula gives $8$ as an upper bound for the product rank of $\perm_4$ \cite{glynn:10a}.
Our result also gives a lower bound of $6$ on the \emph{tensor rank} of $\perm_4$.

To fix notation, suppose that $\perm_4$ is the permanent of the matrix 
\[
M=\left(\begin{array}{c c c c}
z_{11} &z_{12}& z_{13}& z_{14}\\
z_{21} &z_{22}& z_{23}& z_{24}\\
z_{31} &z_{32}& z_{33}& z_{34}\\
z_{41} &z_{42}& z_{43}& z_{44}\\
\end{array}
\right).
\]
The hypersurface $V(\perm_4)\subset \PP^{15}$ contains exactly $8$ $11$-planes \cite{ilten:15a}: $H_i$ and $V_j$ for $i,j\leq 4$ are given respectively by the vanishing of the $i$th row or $j$th column of $M$. Any two of the $H_i$, or any two of the $V_j$ span the entire space $\PP^{15}$.

If $\pr(\perm_4)\leq 5$, then $V(\perm_4)$ is isomorphic to $X_{5,4}$ intersected with a $15$-dimensional linear space.
If under the embedding of $V(\perm_4)$ in $X_{5,4}$, two of the $H_i$ or two of the $V_j$ are contained in a common coordinate hyperplane, it follows that $\pr(\perm_4)\leq 4$, contradicting the above bound. Thus, we may assume that this is not the case.

On the other hand, it follows from Theorem \ref{thm:conj} that any $11$-plane of $X_{5,4}$ is contained in the intersection of three coordinate hyperplanes. Since no pair of $H_i$ or $V_j$ is contained in a common coordinate hyperplane, some pair $(H_a,V_b)$ must be contained in a common coordinate hyperplane. Now, $H_a$ and $V_b$ span the $14$-dimensional linear space $L=V(z_{ab})\subset \PP^{15}$. But since this $14$-dimensional linear space is contained in a coordinate hyperplane of $X_{5,4}$, we conclude that $\pr(\perm_4')\leq 4$, where $\perm_4'$ is obtained from $\perm_4$ by setting $z_{ab}=0$. We will show that this cannot be. 

Indeed, in such a situation we would have $V(\perm_4')\subset \PP^{14}$ isomorphic to the intersection of $X_{4,4}$ with a $14$-dimensional linear space. Intersecting $H_i$ and $V_j$ with $L$, we arrive at a set of $8$ $11$- or $10$-dimensional planes $H_i',V_j'$ with properties similar to above. Similar to above, if a pair of $H_i'$ or $V_j'$ is contained in a coordinate hyperplane in $X_{4,4}$, then we would have that $\pr(\perm_4')\leq 3$. But if this is not the case, an argument similar to above shows that $\pr(\perm_4'')\leq 3$, where  $\perm_4''$ is obtained from $\perm_4$ by setting two variables equal to zero. The key step of the argument is here another application of Theorem \ref{thm:conj} showing that any $10$-plane of $X_{4,4}$ is contained in four coordinate hyperplanes.

To arrive at the final contradiction, first note that  $\pr(\perm_4')\leq 3$ implies $\pr(\perm_4'')\leq 3$. The latter implies in particular that one can write $\perm_4''$ as a form in $12$ variables, that is $V(\perm_4'')\subset\PP^{13}$ must be a cone. However, utilizing the natural torus action on $V(\perm_4'')$ similar to in \cite[Proposition 2.3]{ilten:15a}, one easily verifies that the intersections of $H_1,H_2,H_3,H_4$ with this $\PP^{13}\subset \PP^{15}$ are all maximal linear subspaces of $V(\perm_4'')$. But their common intersection is empty, which contradicts $V(\perm_4'')$ being a cone. We conclude that $\pr(\perm_4'')>3$, which in turn implies $\pr(\perm_4')>4$, which finally implies $\pr(\perm_4)>5$.  

\end{ex}
\subsection*{Acknowledgements} We thank Zach Teitler and the anonymous referees for helpful comments. The first author was partially supported by an NSERC Discovery Grant.

\bibliographystyle{amsalpha}
\renewcommand{\MR}[1]{\relax}
\bibliography{fano-split}
\end{document}